\titleformat*{\section}{\bfseries \center}
\titleformat*{\subsection}{\bfseries \center}
\newcommand{\light}[1]{\textcolor{gray}{#1}}
\newcommand{\ignore}[1]{}
\patchcmd{\thebibliography}{\section*{\refname}}{}{}{}
\newtheorem{theorem}{Theorem}
\newtheorem{corollary}[theorem]{Corollary}
\newtheorem{prop}[theorem]{Proposition}
\newtheorem{definition}[theorem]{Definition}
\DeclarePairedDelimiter\abs{\lvert}{\rvert}
\DeclarePairedDelimiter\ip{\langle}{\rangle}
\begin{document}

\renewenvironment{abstract}
 {\small
  \begin{center}
  \bfseries \abstractname\vspace{-.5em}\vspace{0pt}
  \end{center}
  \list{}{%
    \setlength{\leftmargin}{10mm}
    \setlength{\rightmargin}{\leftmargin}%
  }%
  \item\relax}
 {\endlist}

\title{\textbf{Computation of Scattering Matrices and their Derivatives for Waveguides}}

\author{Greg Roddick}
\date{28 November 2020; $3^{rd}$ Revision}

\maketitle

\begin{abstract}
This paper describes a new method to calculate the stationary scattering matrix and its derivatives for Euclidean waveguides. This is an adaptation and extension to a procedure developed by Levitin and Strohmaier which was used to compute the stationary scattering matrix on surfaces with hyperbolic cusps~\cite{alexnew}, but limited to those surfaces. At the time of writing, these procedures are the first and only means to  explicitly compute such objects. In this context the challenge we faced was that on Euclidean waveguides, the scattering matrix naturally inhabits a Riemann surface with a countably infinite number of sheets making it more complicated to define and compute. We overcame this by breaking up the waveguide into compact and non-compact components, systematically describing the resolvent for the Neumann Laplace operator on both of them, giving a thorough treatment of the Riemann surface, and then using a ``gluing" construction~\cite{rm} to define the resolvent on the whole surface. From the resolvent, we were able to obtain the scattering matrix. The algorithm we have developed to do this not only computes the scattering matrix itself on such domains, but also arbitrarily high derivatives of it directly. We have applied this, together with the finite element method, to calculate resonances for a selection of domains and will present the results of some numerical calculations in the final section. Whilst this is certainly not the first, nor only method to compute resonances on these domains, ie. Levitin and Marletta have done so previously \cite{lm} and Aslanyan, Parnovski and Valiliev before them~\cite{apv} and other techniques, such as perfectly matched layers may be adapted for this purpose~\cite{Jiang_2020}. The method described here has several advantages in terms of speed and accuracy and moreover, provides more information about the scattering phenomena. 
\end{abstract}

\begin{keywords}
scattering matrix, scattering theory, waveguide, resonances
\end{keywords}

\tableofcontents

\section{Introduction}
\subsection{Background overview and outline of the paper}
Over the course of this paper, a method to calculate the stationary scattering matrix, and its derivatives for Euclidean waveguides with cylindrical ends will be presented, along with the results from some numerical calculations making use of it. 
\\
\\
We will proceed as follows: For the remainder of this section, we will give the reader an overview of the background material and introduce the notion of waveguides and scattering matrices. The main body of the paper will introduce and describe each concept necessary for the end result and ether provide the reader with proofs or references where appropriate. Section~\ref{sheetsstuff} will be devoted to describing the domain of the resolvent function for the Neumann Laplace operator on $\mathbb{R}_+,$ the set of real numbers with positive values. This will be used in section~\ref{geftosm} to define and describe the concept of generalised eigenfunctions and the scattering matrix itself, and show how the latter may be obtained from the former. Section~\ref{ndmsection} will focus on the Neumann to Dirichlet map, both its definition and calculation, and its use for computing the scattering matrix. We will then go on to explain how to compute the scattering or $S$ matrix in section~\ref{smcalcsection}, culminating in an explicit formula for it in equation~(\ref{finalsm}). The method from section~\ref{smcalcsection} will be extended in section~\ref{derrivssection} to calculate derivatives. With a fast and efficient method of computing the $S$ matrix and its derivatives, we apply it and produce numerical results. The background to the numerical computations will be presented in section~\ref{resbackground} and tabulated data will be presented in section~\ref{numresults} along with links to some animations.
\\
\\
Waveguides are piece-wise, path connected subsets of $\mathbb{R}^n$ that can be written as the union of a compact domain and any number of non-compact, cylindrical ends. The compact and non-compact parts share a common boundary (see Figure~\ref{overflow}). Whilst the ends are non-compact and infinite, they have a certain regularity that allows explicit calculation on them, the compact part may have a more complicated geometry which precludes this. The stationary scattering matrix, in this context, describes the outcome of a scattering event; the scattering of a wave packet in the compact domain, originating at infinity. In any such event, a proportion of the incoming wave packet will be transmitted and a proportion reflected; the coefficients of the scattering matrix contain this information. 

The ends  of the waveguide can be thought of as the Cartesian product of their boundary with the positive, real half-line $\mathbb{R}_+$. A notable feature of Euclidean waveguides is that the scattering matrix admits a meromorphic continuation to a certain Riemann surface with a countably infinite number of sheets~\cite{christiensen}. We will present the reader with a proof of this. The existence of a meromorphic continuation is a property inherited from the resolvent of the Laplace operator on these domains whose integral kernel is made up a direct sum involving complex square roots. In order to construct this meromorphic continuation, one must first construct a meromorphic continuation of the resolvent. To do this, we will use a well-known ``gluing" construction described in detail by Melrose (see \cite{rm}) which we have adapted for waveguides. The construction makes use of the meromorphic Fredholm theorem and the fact that the resolvent, for the Neumann Laplace operator on the ends of the waveguide, can be easily computed as an integral kernel. The resolvent can then be used to construct generalised eigenfunctions, a generalisation of the notion of eigenfunctions, and from them the scattering matrix. This is the first time such techniques have been explicitly adapted to prove this and, at the time of writing, they remain the only such method.
\\
\\
The Neumann to Dirichlet map, takes boundary data for solutions to partial differential equations with Neumann boundary conditions and maps it to the corresponding data for solutions to the same equations with Dirichlet boundary conditions. It is a vital component of this algorithm and we make heavy use of the quick and flexible method developed by Levitin and Marletta~\cite{lm} to compute it. They were able to formulate the Neumann to Dirichlet map in terms of an infinite sum of Dirichlet data of Neumann eigenvalues on the compact, part of the waveguide. The advantage of this method is that the computation of the Dirichlet data of Neumann eigenvalues, the most computationally costly step, typically using the Finite Element Method, has to be performed only once. Though there are other methods to numerically solve partial differential equations, the point stands. Previously, if one wished to compute the Neumann to Dirichlet map, multiple numerical solutions of the underlying pde. would be required each time (See Definition~\ref{ndmapbasis} and Section~\ref{lmtechnique} which follows it for the contrasting techniques). We will present both methods and we will also show that the scattering matrix, as well as its derivatives, can be obtained from this data directly. Levitin and Strohmaier have already used this technique to obtain the scattering matrix on finite volume, non-compact hyperbolic surfaces~\cite{alexnew}. Due to the more complicated nature of the Riemann surface on which the resolvent sits, the  problem of determining the scattering matrix for waveguides, which we provide a solution to here, is more complex.
\\
\\
Being in possession of a fast and efficient algorithm to compute the scattering matrix and its derivatives enables the calculation of resonances which we define to be poles of the scattering matrix. We are able to do this using a combination of numerical contour integration and Newton's method. The time delay and scattering length~\cite{wigner}\cite{eisenbud} can also be computed from the scattering matrix.
\\
\\
Evans, Levitin and Vassiliev's 1994 paper~\cite{trappedmodes} proved the existence of embedded eigenvalues (eigenvalues for an operator embedded within its continuous spectrum) for the Neumann Laplacian on two dimensional waveguides with an obstacle and/or deformation of the waveguide, so long as the domain has cross-sectional symmetry. 
This was further generalised  to waveguides with cylindrical ends by Davies and Parnovski~\cite{trappedmodes2}. Parnovski and Levitin have, amongst others, produced two other papers on this topic~\cite{hawkparnovski}~\cite{pencilev}. Embedded eigenvalues can be calculated numerically with their method and we have included some examples in the results section. The main focus for our numerical experiments, however, has been on complex resonances.
\\
\\
Levitin and Marletta \cite{lm} and Aslanyan, Parnovski and Vassiliev~\cite{apv}, were able to compute complex resonances for a collection of waveguides. In Levitin and Marletta's case, they computed embedded eigenvalues for a domain with cross-sectional symmetry and observed them decaying to complex resonances when a small perturbation of the domain destroyed that symmetry. We have been able to replicate their results for the same domains with our method, making a slight  improvement on accuracy. We have also performed some additional numerical calculations of our own, the results of which are presented in section~\ref{numresults}. This is certainly not the only method for computing resonances on Waveguides, perfectly matched llayers are a feasible alternative~\cite{Jiang_2020}, though to replicate he results we provide, existing work will need adaptation.
\\

\subsection{Waveguides}
\label{wgdef}
Let us denote the waveguide by $M$; it is embedded in $n$-dimensional Euclidean space with $K$ cylindrical ends. $M$ can be written as $M=E\cup X,$ where $X$ is a compact, piece-wise connected manifold, with a piece-wise smooth Lipschitz boundary. We can write the ends (the non-compact part of $M$) $E,$ whose $k$ connected components will be denoted as $E_k,$ as follows:
$$E=\Gamma \times \mathbb{R}_+=\bigcup_{k=1}^K [\Gamma_k\times \mathbb{R}_+]= \bigcup_{k=1}^K E_k.$$
Where each $\Gamma_k \subset \mathbb{R}^{n-1}$ is a compact and connected domain, with smooth boundary, and for any $i\neq j$, 
$$[\Gamma_i \times \mathbb{R}_+ ] \cap  \left[\Gamma_j \times  \mathbb{R}_+  \right] = \emptyset.$$ 
Define
$$E_k=\Gamma_k \times \mathbb{R}_+ \quad \text{    and    }\quad  \Gamma=E\cap X=\lbrace0 \rbrace \times \Gamma= \bigcup_{k=1}^K\left[ \lbrace 0 \rbrace \times \Gamma_k\right].$$
We will call the boundary of the whole waveguide $M, \ \Sigma.$ 

\begin{figure}[H]
\centering
\includegraphics[width=90mm, height=80mm]{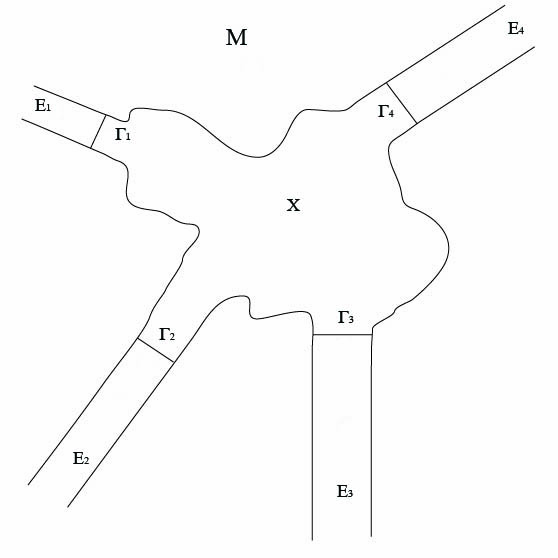}
\caption{Waveguide}
\label{overflow}
\end{figure}

On the ends $E$ we can use separation of variables to deal with the compact $\Gamma$ and non-compact $\mathbb{R}_+$ individually. Reed and Simon have given a rigorous definition of this in their text which we will reproduce. The original, with proof, can be found in here~\cite[p.~52,II.10]{rs1}: 

\begin{theorem}
\label{tens}
Let $\langle M_1 , \mu_1 \rangle$ and $\langle M_2 , \mu_2 \rangle$ be measure spaces so that  $L^2(M_1 , d \mu_1)$ and $L^2(M_2 , d \mu_2)$ are separable.
\\
\\
Then there is a unique isomorphism $\cong,$ from $L^2(M_1 , d \mu_1) \hat{\otimes} L^2(M_2 , d \mu_2)$ to \\$ L^2(M_1 \times M_2, d \mu_1 \otimes d \mu_2)$, where the $ \hat{\otimes}$, denotes the completion of the tensor product, so that  $f \otimes g \mapsto fg.$
\end{theorem}

\begin{corollary}
\label{sov}
With the ends of the waveguide $E$ defined as above, we can now write
$$L^2(E)\cong L^2(\mathbb{R}_+) \hat{\otimes} L^2(\Gamma).$$
\end{corollary}

$\Delta_{E},$ the Laplace operator on $L^2(E)$ can now be written in the form
\begin{equation}
\label{sepvar1}
\Delta_{E}= -\frac{\partial^2}{\partial x^2} \otimes  1 - 1 \otimes \Delta_{\Gamma},
\end{equation}
where $\Delta_{\Gamma}$ is the Laplace operator acting on $L^2(\Gamma), \ 1 \otimes \Delta_{\Gamma}$ is the tensor product of $\Delta_{\Gamma}$ with the identity operator on $L^2(\mathbb{R}_+)$ and $-\frac{\partial^2}{\partial x^2} \otimes  1$ is the tensor product of the differential operator $-\frac{\partial^2}{\partial x^2}$ acting on $L^2(\mathbb{R_+})$ with the identity operator on $L^2(\Gamma).$  The compactness of $\Gamma$ means that for any $\lambda$ in the resolvent set of $\Delta_\Gamma$, $L^2(\Gamma)$ has an orthonormal basis, consisting of eigenfunctions of $(\Delta_{\Gamma}-\lambda)^{-1}$. An application of the discrete Fourier transform yields:
$$L^2(\Gamma)\cong \bigoplus_{j=0}^\infty \mathbb{C} = l^2.$$

We conclude that 
\begin{equation}
\label{sepvar2}
L^2(E) \cong L^2(\mathbb{R}_+) \otimes L^2(\Gamma) \cong L^2(\mathbb{R}_+) \otimes l^2 \cong \bigoplus_{j=0}^{\infty}L^2(\mathbb{R}_+). 
\end{equation}
$(\Delta_{E_j}-\lambda)$ acts on each direct summand in~(\ref{sepvar2}) by
$-\frac{\partial^2}{\partial x^2} -\lambda+\mu_j,$ and the $\mu_j$ are the, not necessarily distinct, Neumann eigenvalues of $\Delta_\Gamma,$ enumerated in ascending order and repeated with multiplicity taken into account. We shall henceforth refer to each of the summands in Equation~\ref{sepvar2} as \textbf{modes}.
$\Delta_{E_j}-\lambda$ acts on each mode as multiplication by $\xi^2-\lambda+\mu_j$ in the Fourier spectral representation (after an application of the Fourier transform). 
\\
\\
The Schwartz Kernel Theorem proves existence and uniqueness of an integral kernel for the Neumann resolvent on a half-line and comes about after a simple calculation. See~\cite[Page 22]{tetgen} for an explicit calculation. If we take the domain be $\mathbb{R}_+$ with Neumann boundary conditions at $0,$ the kernel of the resolvent will be of the form
\begin{equation}
\label{intkernel}
\frac{-e^{i|x+y|
\sqrt{\lambda}
}}{4 i\sqrt{\lambda}}+\frac{-e^{i |x-y|
\sqrt{\lambda}}}{4 i \sqrt{\lambda}}.
\end{equation}

We should take note of the fact that when $x \neq y$ the kernel is holomorphic when defined as a function of $\lambda$ in the branch of the square root where $\mathrm{Im}(\sqrt{\lambda})>0$. The anti-diagonal doesn't cause problems, because $x$ and $y$ are non-negative. Due to the ellipticity of $(\Delta-\lambda),$ elliptic boundary regularity can be invoked to show that the resolvent kernel is smooth away from the diagonal also.
\\
\\
The resolvent for $\Delta_{E},$ written as it is in (\ref{sepvar1}), can now be written as

\begin{equation}
\label{resdecompo}
R_0(\lambda)= \bigoplus_j r^j(\lambda),
\end{equation}

where $r^j(\lambda)=\frac{1}{\xi^2 + \mu_j- \lambda}$ in the spectral representation. This means that equation (\ref{intkernel}) can be slightly modified to give the kernel on each mode as

\begin{equation}
\label{neumresintker}
r^j(\lambda)=
\frac{-e^{i|x+y|
\sqrt{\lambda-\mu_j}
}}{4 i\sqrt{\lambda- \mu_j}}+\frac{-e^{i |x-y|
\sqrt{\lambda- \mu_j}}}{4 i \sqrt{\lambda- \mu_j}}.
\end{equation}
The existence of square roots, and their branches, adds extra complexity to this resolvent kernel. Rather than talking about $R_0(\lambda)$ as being defined on $\mathbb{C}$, we must instead talk about it being defined on a Riemann surface $Z,$ on which it is single valued function of $\lambda$ (see \cite{christiensen}).
\section{A description of the domain of our resolvent}
\label{sheetsstuff}
We will denote the the Neumann eigenvalues of $\Delta_\Gamma$, enumerated in ascending order with multiplicity taken into account, by $\lbrace \mu_j \rbrace$ and the set of such eugenvalues discarding repeated entries by $\lbrace \eta_j \rbrace.$ This follows a convention of notation established by Christiensen \\ \cite[Page5]{christiensen}. The reason for this is that for each $j\in \mathbb{N}$ corresponds to a branch point for $\sqrt{\lambda-\eta_j}.$ A complete description of this domain has already appeared in the paper by Guillop{\'e}~\cite{french}, we will, nevertheless, present a brief overview here.
\\
\\
In section \ref{wgdef}, we saw that the resolvent for $\Delta_E$ is made up of the direct sum of the $r^j(\lambda),$ acting on the direct sum of $L^2$ spaces; equation (\ref{neumresintker}). The Riemann surface for each individual summand that makes up this resolvent will have a branch at each of the $\eta_j.$ We define the \textbf{physical sheet} of $Z$ to be the sheet of the surface, which can be identified with $\mathbb{C}\setminus \mathbb{R}_+,$ for which all the $\sqrt{\lambda-\eta_j},$ have positive imaginary part and identify it with $\mathbb{C}\setminus \mathbb{R}_+$. The whole surface $Z$ is made up of a countable number of ``sheets" of this nature, each of which represents a choice as to whether each $\sqrt{\lambda-\eta_j}$ has a strictly positive imaginary part or not.
\\
\\
 The resolvent of $\Delta_E$ is analytic on the physical sheet for values of $\lambda$ where it is indeed the resolvent operator and not just a continuation of it. When we wish to extend the resolvent from the physical sheet to other sheets, we must do so along a path. It is necessary to have some kind of a coherent system to categorise such paths.
\\
\\
\begin{figure}[H]
  \centering
    
  \tikzset{middlearrow/.style={
        decoration={markings,
            mark= at position 0.5 with {\arrow{#1}} ,
        },
        postaction={decorate}
    }}

  \begin{tikzpicture}

    \coordinate (Origin)   at (0,0);
    \coordinate (examplepoint)   at (7,-1);
    \coordinate (XAxisMin) at (-3,0);
    \coordinate (XAxisMax) at (8,0);
    \coordinate (YAxisMin) at (0,-2);
    \coordinate (YAxisMax) at (0,5);
    \draw [very thick, black,-latex] (Origin) -- (XAxisMax);
   \draw [thin, gray,-latex] (Origin) -- (YAxisMax);
    \node[draw,circle,inner sep=2pt,fill, label= below :$\mu_1$] at (0,0)  {};
    \node[draw,circle,inner sep=2pt,fill, label= below :$\mu_2$] at (1.5,0) (ev2) {} ;
    \node[draw,circle,inner sep=2pt,fill, label= below :$\mu_3$] at (3.5,0) (ev3) {};
    \node[draw,circle,inner sep=2pt,fill, label= below :$\mu_4$] at (5,0) (ev4) {};
      \node[draw,circle,inner sep=2pt,fill, label= below :$\mu_5$] at (7.2,0) (ev5) {};
      \node[draw,circle,inner sep=1pt,fill] at (-1,1.5)  {};
  \node[draw,circle,inner sep=1pt,fill] at (7,-1)  {};

    \coordinate (basepoint) at (-1,1.5);

\draw [middlearrow={latex}] plot [smooth,tension=1]  coordinates {(basepoint)(0,1) (2,-0.8) (4,2) (examplepoint)};      
\draw [middlearrow={latex}] plot [smooth,tension=0.6] coordinates {(basepoint) (0.5,2.5) (5,2) (examplepoint)};
\draw [middlearrow={latex}]   plot [smooth,tension=0.6]  coordinates {(basepoint)  (0,0.6) (1.5,-1.5) (4,-1.6) (examplepoint)};

    \end{tikzpicture}
  \caption{Inequivalent paths that lift to paths with endpoints in sheets $(1,2,3,4),$ $(1,3,4),$ and $(1),$ of $Z$ respectively.}
  \label{Windingdemo}
\end{figure}
 A path in $Z$ will remain a path in $\mathbb{C}\setminus \lbrace \eta_j \rbrace$ under the covering projection. Similarly, a path in $\mathbb{C}\setminus \lbrace \eta_j \rbrace,$ lifts to a path in $Z$ if the location (in terms of which sheet) of the pre-image of one of the endpoints is known (or given). We note that paths crossing of the intervals $(\eta_j,\eta_{j+1})$ on the real line in $\mathbb{C}\setminus \lbrace \eta_j \rbrace$ correspond with crossings of the boundaries between sheets in $Z$. This means that homotopy equivalent paths in $Z$ originating in the physical sheet, paths in $\mathbb{C}\setminus \lbrace \eta_j \rbrace,$ and sheets of $Z$ are all in one-to-one correspondence with each other. Each class of paths in $\mathbb{C}\setminus \lbrace \eta_j \rbrace$ can be indexed by a finite subset of $\mathbb{N}$ that we call $ J$ constructed by counting the number of times the path crosses the $n^{th}$ interval mod $2.$ Equivalently $J$ may be defined as follows:
\begin{equation}
\label{jay}
 J= \lbrace  j\in\mathbb{N}:\mathrm{Im} (\sqrt{\lambda-\mu_j}) \leq0 \rbrace.
\end{equation}
 The monodromy theorem can be used to show that meromorphic continuations along paths are unique and hence any meromorphic continuation from the  physical to a non-physical sheet is unique no matter which path is taken. Melrose and others have already
demonstrated the existence of analytic continuations for the individual summands $r_j(\lambda)$ that make up the resolvent kernel for $\Delta_E$~\cite[Page 11]{rm}. The ``gluing" constructions and the meromorphic Fredholm theorem are used to prove the existence of a meromorphic continuation of the resolvent to $\Delta$ on the whole waveguide $M$ (this can be found in more detail in Melrose's text and Guilope's paper \cite{french}\cite{rm}).  This has been adapted by us for use on Euclidean waveguides with minimal modifications, the full proof of the existence and uniqueness of a meromorpic continuation of the resolvent on $M$ can be found in the author's thesis~\cite[Page 30]{tetgen}.
\section{Generalised Eigenfunctions and the Scattering Matrix}
\label{geftosm} 
Let $\chi$ be a function on the waveguide $M$ with support on $E$ equal to $1$ outside a compact set, and fix an orthonormal basis of Neumann eigenfunctions of  $\Delta_\Gamma,$ namely $\lbrace\nu_j(y) \rbrace$. 
As described in section~\ref{sheetsstuff}, we may identify this sheet with $\mathbb{C} \setminus  \mathrm{min}_{j\in J}[\mu_j ,\infty)$. We will generally be working with either the physical sheet of $Z,$ or the sheet defined by $J$ which we shall refer to as the non-physical sheet from now on. When identified in this way, every $\lambda$ in the non-physical sheet of $Z$ has its counterpart in the physical sheet which is identical as a complex number, but not as on the Riemann surface.\\
\\
\\
We can now define: 
\begin{IEEEeqnarray*}{ll}
\IEEEyesnumber
\label{gef}
\varphi_J(\lambda,x,y)&=\sum_{j\in \mathbb{N}} \chi e^{-i \sqrt{\lambda -\mu_j} x}\nu_j(y) - R(\lambda) \left[(\Delta  - \lambda)\left( \chi e^{-i \sqrt{\lambda-\mu_j} x}\nu_j(y) \right)\right].
\end{IEEEeqnarray*}

for $\lambda$ in the physical sheet, and for $\lambda$ in the non-physical sheet:
\begin{IEEEeqnarray*}{ll}
\varphi_J(\lambda,x,y)&= \sum_{j\notin J} \chi e^{-i \sqrt{\lambda -\mu_j} x}\nu_j(y) - R(\lambda) \left[(\Delta  - \lambda)\left( \chi e^{-i \sqrt{\lambda-\mu_j} x}\nu_j(y) \right)\right]\\
&
\IEEEyesnumber \label{gefnonps}
+ \sum_{j \in J} \chi e^{+i \sqrt{\lambda -\mu_j} x}\nu_j(y) - R(\lambda) \left[(\Delta  - \lambda)\left( \chi e^{+i \sqrt{\lambda-\mu_j} x}\nu_j(y) \right)\right].
\end{IEEEeqnarray*}

The formulae in Equations~(\ref{gef}) and (\ref{gefnonps}) describe a $\textbf{Generalised Eigenfunction}.$ A generalised eigenfunction, we define to be a solution of $(\Delta - \lambda)\varphi_J(\lambda,x,y)=0$ which is not square integrable, unlike an eigenfunction which is. A reader may observe the $R(\lambda) (\Delta  - \lambda)$ in the right hand side of the equations above and mistakinly conclude that these two operators are mutually inverse and so equations (\ref{gef}) and (\ref{gefnonps}) sum to zero. This is not the case, the reason is that $R(\lambda)$, unless $\lambda$ is on the physical sheet, is a meromorphic continuation of the resolvent and not the resolvent itself, and when $\lambda$ is in the physical sheet, the resolvent is only an inverse of $(\Delta-\lambda)$ for $L^2,$ square integrable functions. These functions have a number of properties:
 
\begin{prop}

\label{3props}
\begin{enumerate}

\item $\varphi_J(\lambda,x,y)$ is a meromorphic function of $\lambda$ for any $\lambda \in Z$ and holomorphic if $\lambda$ is in the physical sheet.  
\item For $j \in J$ and $\lambda$ in the physical sheet of $Z;$
 \begin{equation}
 \label{gef2}
 R(\lambda) \left[(\Delta  - \lambda)\left( \chi e^{-i \sqrt{\lambda-\mu_j} x}\nu_j(y) \right)\right] \in L^2 (\mathbb{R}_+).
 \end{equation}

\item There exists a unique, meromorphic $S_{J,j,k}(\lambda)$ such that on $E,$ and with $\lambda$ in the physical sheet;
\begin{IEEEeqnarray*}{lll}
\IEEEyesnumber \label{gefa}
\varphi_J(\lambda,x,y) &=&\sum_{j\in J} \left( e^{-i \sqrt{\lambda-\mu_j} x} \nu_j(y)+\sum_{k\in J} S_{J,j,k}(\lambda)e^{i \sqrt{\lambda-\mu_k} x}\nu_k(y) \right) \\
&+&\sum_{j \notin J} T_j(\lambda)e^{i\sqrt{\lambda-\mu_j}x}\nu_j(y)
\end{IEEEeqnarray*}
and
\begin{IEEEeqnarray*}{lll}
\IEEEyesnumber\varphi_J(\lambda,x,y) &=&\sum_{j\in J} \left( e^{i \sqrt{\lambda-\mu_j} x} \nu_j(y)+\sum_{k\in J} S_{J,j,k}(\lambda)e^{-i \sqrt{\lambda-\mu_k} x}\nu_k(y) \right) \\
&+&\sum_{j \notin J} T_j(\lambda)e^{i\sqrt{\lambda-\mu_j}x}\nu_j(y),
\end{IEEEeqnarray*}
for $\lambda$ in the non-physical sheet. In the case where $\lambda$ is in the physical sheet, the $S_{J,j,k}(\lambda)$ are holomorphic.
\end{enumerate}
\end{prop}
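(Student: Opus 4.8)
The plan is to reduce all three claims to three facts. First, writing $u_j(\lambda):=\chi\,e^{\varepsilon_j i\sqrt{\lambda-\mu_j}\,x}\nu_j(y)$ for the block appearing in $\varphi_J$ in index $j$ (with $\varepsilon_j\in\{+1,-1\}$ the sign prescribed in the definition), each $u_j$ is holomorphic in $\lambda\in Z$, because $\sqrt{\lambda-\mu_j}$ is single-valued and holomorphic on $Z$ by construction of $Z$. Second, since $e^{\varepsilon_j i\sqrt{\lambda-\mu_j}x}\nu_j(y)$ solves $(\Delta_E-\lambda)u=0$ on $E$ by separation of variables (\ref{sepvar1}), the function $(\Delta-\lambda)u_j=[\Delta,\chi]u_j$ is supported in the compact set $\{d\chi\neq 0\}$, so $\lambda\mapsto(\Delta-\lambda)u_j$ is a holomorphic map into a fixed space $L^2_{\mathrm{comp}}(M)$. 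Third, $R(\lambda)$ is meromorphic on $Z$ and holomorphic on the physical sheet, where it is the genuine resolvent — this was established in the previous section via the glueing construction and the meromorphic Fredholm theorem. Claim (1) then follows: each summand $u_j-R(\lambda)[(\Delta-\lambda)u_j]$ is meromorphic on $Z$ and holomorphic on the physical sheet; the finitely many non-evanescent blocks ($j\in J$) cause no difficulty, and for the cofinitely many evanescent ones I would check that the series converges locally uniformly in $\lambda$ — on $\{d\chi\neq0\}$ the coordinate $x$ is bounded while $\mathrm{Im}\sqrt{\lambda-\mu_j}\to+\infty$, so the relevant (decaying) exponential is $O(e^{-c\sqrt{\mu_j}})$, which with $\|\nu_j\|_{L^2}=1$ and local boundedness of $R(\lambda)$ off its poles gives a convergent majorant; meromorphy passes to the limit.

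Claim (2) is short. On the physical sheet $R(\lambda)$ is bounded on $L^2(M)$ away from its poles, and $(\Delta-\lambda)u_j=[\Delta,\chi]u_j$ is compactly supported hence in $L^2$; therefore $R(\lambda)[(\Delta-\lambda)u_j]\in L^2$, in fact in the domain of $\Delta$. The content is that beyond the support of $[\Delta,\chi]u_j$ this function solves the free mode ODEs on each end, so in each mode $\ell$ it is a multiple of the branch $e^{i\sqrt{\lambda-\mu_\ell}x}$ with positive imaginary exponent — i.e.\ the scattered field produced by the $\chi$-term of $\varphi_J$ is purely outgoing on the physical sheet.

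For claim (3) I would first observe that $(\Delta-\lambda)\varphi_J=0$ on $M$: on the physical sheet because $(\Delta-\lambda)R(\lambda)=\mathrm{Id}$ on $L^2$ makes the two sums in the definition cancel term by term, and on the non-physical sheet by meromorphic continuation of this identity. Restricting to an end $E_k=\Gamma_k\times\mathbb{R}_+$ and using (\ref{sepvar2}), expand $\varphi_J=\sum_\ell c_\ell(x)\nu_\ell(y)$; each $c_\ell$ solves $-c_\ell''+(\mu_\ell-\lambda)c_\ell=0$ on all of $\mathbb{R}_+$, hence $c_\ell(x)=A_\ell e^{i\sqrt{\lambda-\mu_\ell}x}+B_\ell e^{-i\sqrt{\lambda-\mu_\ell}x}$. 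Far out $\chi\equiv1$, so the $\chi$-part of $\varphi_J$ contributes exactly the exponentials prescribed in the definition — the incoming wave in each channel $j\in J$ and nothing further in the channels $j\notin J$ — while by claim (2) and the explicit half-line kernel (\ref{neumresintker}) (for $g$ supported in $\{y\le R\}$ and $x>R$, $r^\ell(\lambda)g(x)$ is a constant multiple of $e^{i\sqrt{\lambda-\mu_\ell}x}$), the $R(\lambda)$-part contributes only the outgoing exponential in each mode. Matching gives the first stated expansion; continuing $R(\lambda)$ off the physical sheet flips the branch of each $\sqrt{\lambda-\mu_j}$, $j\in J$, converting those outgoing exponentials into the ones in the second stated expansion. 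One reads off $S_{J,j,k}(\lambda)$ ($k\in J$) and $T_j(\lambda)$ ($j\notin J$) as the coefficients of $e^{\pm i\sqrt{\lambda-\mu_k}x}\nu_k$ in the scattered part; uniqueness is immediate from linear independence of $\{e^{i\sqrt{\lambda-\mu_\ell}x},e^{-i\sqrt{\lambda-\mu_\ell}x}\}$ away from the thresholds $\lambda=\mu_\ell$; and meromorphy of $S_{J,j,k}$, holomorphy on the physical sheet, follows because the coefficient is extracted from $R(\lambda)[(\Delta-\lambda)u_j]$ by pairing its Cauchy data on a cross-section $\{x=c\}$ against $e^{\mp i\sqrt{\lambda-\mu_k}x}\nu_k(y)$ and dividing by the Wronskian $2i\sqrt{\lambda-\mu_k}$, all meromorphic (resp.\ holomorphic) in $\lambda$ by claim (1) and the properties of $R(\lambda)$.

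The main obstacle is the identification step in (3): showing $R(\lambda)$ applied to a compactly supported source is \emph{exactly} a sum of outgoing mode-exponentials far out on each end, not merely asymptotically, and tracking how this expansion — in particular the branch of each $\sqrt{\lambda-\mu_j}$, $j\in J$ — transforms as $R(\lambda)$ is continued off the physical sheet. This requires the glueing construction for the waveguide resolvent together with the explicit kernel (\ref{neumresintker}), not abstract resolvent bounds alone. Two smaller technical points, already present in (1), are the uniform control of the infinite mode sum and the behaviour at the thresholds $\lambda=\mu_\ell$, where the two exponential solutions degenerate and the Wronskian vanishes — which is exactly why the conclusions are meromorphic, not holomorphic, off the physical sheet.
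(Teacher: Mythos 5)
Your proof is correct and takes essentially the same route as the paper: meromorphy of the constituent functions for (1), the $L^2$ mapping property of the resolvent on the physical sheet for (2), and for (3) separation of variables on $E$ with the square-integrability from (2) forcing the incoming coefficients to be $\delta_{j,k}$, followed by meromorphic continuation to the non-physical sheet. Your added details (the convergent majorant for the mode sum, the Wronskian extraction of the coefficients) merely flesh out steps the paper leaves implicit; note only that the exact outgoing-only form of the resolvent term already follows from (2) plus the mode ODE, so the appeal to the explicit free-end kernel (\ref{neumresintker}) for the full resolvent $R(\lambda)$ is not needed.
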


\begin{proof}
$\\$
\begin{enumerate}
\item Follows from the meromorphicity of the various functions whose products make up $\varphi_J(\lambda,x,y).$
\item This is due to the square integrability of the resolvent kernels in the physical sheet.
\item Observe that for any $j\in J$, each summand in equation (\ref{gef}) becomes zero when acted upon by $\Delta-\lambda.$ Using a simple separation of variables on $E$ we see that any solution to $(\Delta - \lambda)F(\lambda,x,y)=0,$ including the one we have, will be of the form,
\begin{equation}
\label{gefb}
\sum_{k=1}^\infty  \left(A_{j,k} (\lambda) e^{-i\sqrt{ \lambda -\mu_k } x} + B_{j,k} (\lambda) e^{+i \sqrt{ \lambda -\mu_k } x} \right)\nu_k(y).
\end{equation}

We can see that in part 2) of the proposition, the requirement that when we subtract $\chi e^{-i\sqrt{ \lambda-\mu_j} x} \nu_j(y)$ the result be square integrable, means that  $A_{j,k}=\delta_{j,k}.$
 \\
 \\
 In order to reconcile equation (\ref{gefb}) with equation (\ref{gef}), we rename the $B_{j,k}(\lambda), \  S_{J,j,k}(\lambda)$ for $k\in J$. For $k\notin J$ ande see that the remaining terms;
 \begin{equation}
 \label{T}
B_{j,k} (\lambda) e^{+i \sqrt{ \lambda -\mu_k } x} 
 \end{equation}
are all square integrable as $k\notin J \Rightarrow \mathrm{Im}(\sqrt{\lambda-\mu_k})>0.$ 
\\
\\
When we combine the all such summands to get $\varphi(\lambda,x,y),$ as described in equation (\ref{gef}), we get
$$\sum_{j\in J} \left( e^{-i \sqrt{\lambda-\mu_j} x}\nu_j(y)+\sum_{k\in J} S_{J,j,k}(\lambda)e^{i \sqrt{\lambda-\mu_k} x}\nu_k(y) \right) +\sum_{k \notin  J} \sum_{j\in J}B_{j,k}(\lambda)e^{i\sqrt{\lambda-\mu_k}x}\nu_k(y), 
$$

where we can define, for $j \notin J,$  $T_j(\lambda)=\sum_{k\in J}B_{k,j}(\lambda) $ to finish. The result for the non-physical sheet is due to meromorphic continuation.

\end{enumerate}
\end{proof}

Those used to dealing with the dynamic scattering matrix defined, for example, in Reed-Simon~\cite{rs3}, should be aware that the two definitions can be proved to be equivalent. A fundamental property of the generalised eigenfuctions and scattering matrix are their uniqueness in the $L^2$ norm. This means that $S_J(\lambda)$ is also uniquely determined by the geometry of $M,$ the choice of basis for the $J$ Neumann eigenfunctions and $J\subset \mathbb{N}$ itself.
\\
For any given $\lambda$ in the non-physical sheet of $Z,$ given by $J$ and identified with a suitable subset of the complex plane,  we shall denote its counterpart in the physical sheet by $\lambda^*$. 

\begin{theorem}
$S_J(\lambda^*)=\overline{S_J^{-1}(\lambda)}$
\label{funceq}
\end{theorem}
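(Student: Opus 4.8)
The plan is to prove the functional equation by relating a generalised eigenfunction on the non-physical sheet to the complex conjugate of one on the physical sheet, exploiting that $\Delta$ is a real operator, and then reading off the matrix identity from the modal asymptotics on the ends; this is the waveguide analogue of the classical statement that the scattering matrix, continued across a spectral threshold, passes to (the inverse of) its Schwarz conjugate.

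First I would fix a non-exceptional $\lambda$ on the non-physical sheet determined by $J$ and consider $\Psi:=\overline{\varphi_J(\overline{\lambda^*},\cdot,\cdot)}$, i.e.\ the complex conjugate of the physical-sheet generalised eigenfunction at the complex number $\overline\lambda$. Since $\Delta$ has real coefficients and the Neumann eigenfunctions $\nu_j$ may be taken real, $(\Delta-\lambda)\Psi=0$, so $\Psi$ is again a generalised eigenfunction for $\lambda$. The substantive computation is to work out the modal expansion of $\Psi$ on $E$: conjugation sends each $e^{\pm i\sqrt{\overline\lambda-\mu_j}\,x}$ to $e^{\mp i\,\overline{\sqrt{\overline\lambda-\mu_j}}\,x}$, and the branch identity $\overline{\sqrt{\overline\lambda-\mu_j}}=-\sqrt{\lambda-\mu_j}$ (physical branches) turns this into $e^{\pm i\sqrt{\lambda-\mu_j}\,x}$; carrying this through the expansion of $\varphi_J(\overline{\lambda^*})$ supplied by Proposition~\ref{3props}(3), and then matching each $\sqrt{\lambda-\mu_j}$ to the branch the non-physical sheet assigns to it, shows that the incoming and outgoing modal roles in the $J$-channels are interchanged, and that $\Psi$ has, on $E$, exactly the shape that Proposition~\ref{3props}(3) attaches to a generalised eigenfunction on the non-physical sheet, with its incoming amplitudes in the $J$-channels given by the entries of $\overline{S_J(\overline{\lambda^*})}$ and with the closed channels ($j\notin J$) still decaying.

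Next, by the uniqueness in Proposition~\ref{3props}(3) — a solution of $(\Delta-\lambda)u=0$ on $M$ is fixed on a given sheet once its incoming data are prescribed and the closed channels are required to lie in $L^2$, which is legitimate at a non-exceptional $\lambda$ thanks to self-adjointness of $\Delta$ and the meromorphic-Fredholm framework already used for $R(\lambda)$ — I would identify $\Psi$ with the superposition of the channel components of the non-physical-sheet generalised eigenfunction having the same incoming data. Matching the outgoing parts of the two descriptions of $\Psi$ on $E$, using $\langle\nu_a,\nu_b\rangle_{L^2(\Gamma)}=\delta_{ab}$ and the linear independence of the $e^{i\sqrt{\lambda-\mu_k}x}\nu_k(y)$, $k\in J$, yields a matrix identity which, after also invoking the Schwarz-reflection symmetry obtained by running the same conjugation argument within a single sheet (equivalently, $R(\overline\lambda)=R(\lambda)^{*}$ meromorphically continued), collapses to $S_J(\lambda^*)=\overline{S_J^{-1}(\lambda)}$. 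Both sides are meromorphic in $\lambda$ on the sheet by Proposition~\ref{3props}(1), so once the identity is established on a non-empty open subset it holds throughout.

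The step I expect to be the main obstacle is the branch-and-index bookkeeping in the first two paragraphs: one has to be meticulous about which of $e^{\pm i\sqrt{\lambda-\mu_j}x}$ is ``incoming'' and which is ``outgoing'' on each sheet — these roles swap on the indices of $J$ both when one crosses to the non-physical sheet and again under complex conjugation — and one must check that the closed-channel amplitudes $T_j(\cdot)$, $j\notin J$, genuinely decay, so that they do not enter the identification of $\Psi$ as a superposition of the $|J|$ non-physical channel solutions. It is exactly this restriction to the $J$-block, together with the unitary-type cancellation among the outgoing coefficients, that produces a genuine matrix inverse rather than a mere weighted reciprocity relation, and keeping the complex conjugates and indices consistent throughout is where all the care lies.
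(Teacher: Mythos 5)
Your proposal is essentially correct, but it takes a genuinely different route from the paper. The paper never identifies the conjugated eigenfunction with anything: it pairs $\varphi_J(\lambda^*)$ with $\overline{\varphi_J(\overline{\lambda})}$, applies Green's second identity so that the Wronskian-type boundary integral over $\Gamma$ vanishes, substitutes the modal expansions at $x=0$, and uses orthonormality of the $\nu_k$ to read off $\sum_{k\in J}S_{J,j,k}(\lambda^*)\,\overline{S_{J,m,k}(\overline{\lambda})}=\delta_{j,m}$ directly; no uniqueness statement about generalised eigenfunctions is required. You instead exploit that $\Delta$ is real: conjugate the physical-sheet eigenfunction at $\overline{\lambda}$, use $\overline{\sqrt{\overline{\lambda}-\mu_j}}=-\sqrt{\lambda-\mu_j}$ to see the modal roles in the $J$-channels swap, and then identify $\Psi$ with a superposition of the continued channel eigenfunctions and compare coefficients. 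Your route is conceptually transparent (the inverse of $S_J(\lambda)$ visibly appears when unit data are re-expanded against eigenfunctions whose corresponding data are the rows of $S_J(\lambda)$), at the cost of needing a uniqueness lemma; the paper's computation is more pedestrian but self-contained.

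The one step that needs repair is the uniqueness principle you invoke. On the sheet $J$ the continued ``incoming'' exponentials are the decaying ones and the ``outgoing'' ones grow, so ``prescribing the incoming data and requiring the closed channels to be $L^2$'' does not pin a solution down: an $L^2$ argument only controls the difference of two solutions when that difference has no \emph{growing} part. The identification must therefore be run on the growing open-channel data: choose $c_{mj}$ with $\sum_j c_{mj}S_{J,j,k}(\lambda)=\delta_{mk}$ (legitimate at non-exceptional $\lambda$, where $S_J(\lambda)$ is invertible), so that $\overline{\varphi_{J,m}(\overline{\lambda^*})}-\sum_j c_{mj}\varphi_{J,j}(\lambda)$ has no growing part, hence lies in $L^2(M)$ and vanishes because a non-real $\lambda$ is not an eigenvalue of the self-adjoint Neumann Laplacian; comparing the decaying open channels then yields $\overline{S_J(\overline{\lambda^*})}=S_J(\lambda)^{-1}$, which is the functional equation up to the placement of the conjugate point and a transpose supplied by reciprocity. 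Your final ``collapse'' via Schwarz reflection should keep the conjugate point explicit, since as printed the identity equates a holomorphic with an antiholomorphic expression; the paper's own closing sentence has the same imprecision, so this is a matter of matching conventions rather than a defect peculiar to your argument, but as written your justification of the identification would not stand on its own.
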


\begin{proof}
Observe that $$\ip{\Delta \varphi_j(\lambda,x,y),\varphi_J(\overline{\lambda},x,y)}-\ip{\varphi_J,\Delta \varphi_J}=(\overline{\lambda}-\overline{\lambda})\ip{\varphi_J,\varphi_J}=0.$$
The right hand side of the equation above, namely the $(\overline{\lambda}-\overline{\lambda})$ is not an issue, as $\lambda$ is simply a number in this context and the $\overline{\lambda},$ its complex conjugate, will be in the same sheet of $Z$ as $\lambda$.
\\
\\
Green's second identity can now be invoked to give that
$$\int_\Gamma \left( \frac{\partial \varphi_J(\lambda^*,x,y)}{\partial n} \overline{\varphi_J(\overline{\lambda},x,y)} - \varphi_J(\lambda^*,x,y) \frac{\partial \overline{\varphi_J(\lambda,x,y)}}{\partial n} \right)=0.$$

In particular, this means that

\begin{IEEEeqnarray*}{lLl} 
&&\sum_{j,k\in J}i \sqrt{ \lambda^*-\mu_k}(\delta_{k,j}-S_{J,j,k}(\lambda^*) )  \sum_{l,m\in J} \left( \delta_{l,m}+ \overline{S_{J,m,l}(\overline{\lambda})} \right) \int_\Gamma \nu_k(y) \nu_l(y)\\
&-&\sum_{j,k\in J}(\delta_{k,j}+S_{J,j,k}(\lambda^*) )  \sum_{l,m\in J}-  i \sqrt{\lambda-\mu_l}\left( \overline{S_{J,m,l}(\lambda)}- \delta_{l,m} \right) \int_\Gamma \nu_k(y) \nu_l(y)\\
=&&\sum_{j,k\in J}i \sqrt{ \lambda^*-\mu_k}(\delta_{k,j}-S_{J,j,k}(\lambda^*) )  \sum_{l,m\in J} \left( \delta_{l,m}+ \overline{S_{J,m,l}(\overline{\lambda})} \right) \int_\Gamma \nu_k(y) \nu_l(y)\\
&-&\sum_{j,k\in J}(\delta_{k,j}+S_{J,j,k}(\lambda^*) )  \sum_{l,m\in J}  i \sqrt{\lambda^*-\mu_l}\left( \overline{S_{J,m,l}}(\lambda)- \delta_{l,m} \right) \int_\Gamma \nu_k(y) \nu_l(y)\\
=&&0
\end{IEEEeqnarray*}

From this, we obtain
$$\sum_{k,j ,m\in J} \left( (S_{J,j,k}(\lambda^*) -\delta_{k,j})  ( \delta_{k,m}+ \overline{S_{J,m,k}(\overline{\lambda})} ) +  (\delta_{k,j}+S_{J,j,k}(\lambda^*) ) ( \overline{S_{J,m,k}(\overline{\lambda})}- \delta_{k,m} ) \right)=0$$

This is due to the orthonormality of the $\lbrace \nu_j \rbrace$ and the way in which we have identified our sheet of $Z$ with a $\mathbb{C}.$ Multiplying these out and simplifying shows that, for fixed $j,m \in J,$ gives
$$\sum_{k\in J}  S_{J,j,k}(\lambda^*)  \overline{S_{J,m,k}(\overline{\lambda})}=\delta_{j,m}.$$
Since $S_J(\lambda)$ and $S_J(\overline{\lambda})$ are, by construction, the same when $\lambda$ and $\overline{\lambda}$ are in the same sheet of $Z$ the result follows.

\end{proof}

\section{The Neumann to Dirichlet map}
\label{ndmsection}
Given a solution to a pde with Neumann boundary conditions, the Neumann to Dirichlet map takes the Dirichlet boundary data and maps it to the corresponding Dirichlet boundary conditions for this solution. The Neumann to Dirichlet map will be a vital intermediate step between the resolvents, which we have extensively covered in previous sections, and the scattering matrix. On the internal domain, or compact part of the waveguide $X$, the Neumann to Dirichlet map can be numerically computed either directly using the Finite Element Method (\ref{directndm}) or indirectly using a combination of the Finite Element Method and Levitin Marletta's technique (see \ref{lmtechnique}). It is an elementary calculation on the ``ends" $E$. To give a rigorous definition, let $X$ be a Lipschitz domain in $\mathbb{R}^n, \ n \geq 2.$ Define the map 
\begin{equation}
\label{rigndm}
N:H^{-1/2}(\partial X)\rightarrow H^{1/2}(\partial X)
\end{equation}
 acting on $g\in H^{-1/2}(\partial X)$ by $$\mathcal{D}g=\varphi_{|_{\partial X}},$$
 where $\varphi$ is the solution to the Neumann problem, with $g$ as the boundary derivative. This is the inverse of the Dirichlet to Neumann map
 $$\mathcal{D}:H^{1/2}(X)\rightarrow H^{-1/2}(X),$$
whose action on $f\in  H^{1/2}$ is $\mathcal{D}f=\frac{\partial}{\partial n}(HF).$ $H$ is an extension of $f$ to a solution of $(\Delta-\lambda)(Hf)=0$ on $X.$ \cite{ip2}
\subsection{Calculating the Neumann to Dirichlet map in the internal domain}\label{ndmapcalc}
\label{directndm}
It is known that the Neumann eigenfunctions of $(\Delta-\lambda)$ on $\Gamma$, the common boundary between $X$ and $E,$ form an orthonormal basis of $L^2(\Gamma)$, with Neumann eigenvalues  \\ $\mu_j, \ j\in \mathbb{N}$~\cite{mt}. Given a basis of $L^2(\Gamma)$, we may compute the Neumann to Dirichlet map in matrix form. This is the first step towards viewing the Neumann to Dirichlet map as a concrete, computable object. With the correspondence between the Neumann to Dirichlet map, the scattering matrix and the resolvent, computing the Neumann to Dirichlet map in this way allows us to realise these other objects in a similar manner.

\begin{definition}
\label{ndmapbasis}
\textbf{Neumann to Dirichlet map, associated to $(\Delta-\lambda)$ on a basis}

Let us consider an ordered orthonormal basis of $L^2(\Gamma)$, $\lbrace \nu_j \rbrace_{j=0}^\infty$, and $\varphi_k$, such that 
\begin{equation}
\label{prob}
(\Delta - \lambda)\varphi_k=0, \quad \frac{\partial \varphi}{\partial n} |_{\Sigma}=0 ,   \quad \frac{\partial \varphi_k}{\partial n}|_{\Gamma} = \nu_k.
\end{equation}
$\Sigma$ here denotes the boundary of $M, \ \Gamma$ the common boundary by which $X$ is jointed to $E.$ Then the $k,l^{th}$ element of the Neumann to Dirichlet map, in matrix form, with respect to basis  $\lbrace \nu_k \rbrace$, will be given by
$$\langle \varphi|_\Gamma, \nu_l \rangle _{L^2(\Gamma)}.$$
\end{definition}

Obviously, when doing this calculation practically, we must truncate after a finite number of entries.

\subsection{The Levitin-Marletta method for indirect calculation of the Neumann to Dirichlet map}
\label{lmtechnique}
This technique was devised by Levitin and Marletta in~\cite{lm}. The following formula for the $k,l^{th}$ entry of $N(\lambda)$ acting on a basis of the Neumann subspace of $L^2(\Gamma), \ \lbrace \phi_i \rbrace_{i=1}^\infty$ is taken from their paper where it is derived.
\\
\begin{equation}
\label{alternate}
N_{k,l}(\lambda)=\sum_{m=1}^\infty \frac{1}{\lambda-\mu_m}\ip{\phi_k ,U_m|_\Gamma} _{L^2(\Gamma)}.\langle U_m|_\Gamma, \phi_l\rangle _{L^2(\Gamma)}.
\end{equation}

$U_m$ and $\mu_m$ are the eigenfunctions and eigenvalues of the homogeneous Neumann problem on $X$, namely the solutions to;
$$(\Delta-\mu_m)U_m=0, \ \ \frac{\partial U_m}{\partial n}
\vline_{\partial X}=0.$$
\\
\\
Equation (\ref{alternate}) gives us a method  to compute $N_{k,l}$. Truncations of both (\ref{levstrick}) and (\ref{alternate}) can be computed using the finite element method, but (\ref{alternate}) is a significant improvement over direct calculation, as once we have obtained the eigenvalues and Fourier coefficients of their associated eigenfunctions, computing $N(\lambda),$ for any $\lambda$ we wish, now only involves matrix multiplication and not numerical solutions of partial differential equations. The latter is considerably slower, more computationally costly. Levitin and Marletta presented a simple trick or method to further improve the rate of convergence, or accuracy given a fixed number of eigenvalues and eigenfunctions; the derivation of this can also be found in their paper. Where $\overline{\lambda}$ is fixed and $N_{k,l}(\tilde{\lambda})$ a directly computed Neumann to Dirichlet map we get:

\begin{equation}
\label{levstrick}
N_{k,l}(\lambda)=N_{k,l}(\tilde{\lambda})+\sum_{m=1}^\infty \frac{\tilde{\lambda}-\lambda}{\mu_m^2-\lambda\mu_m -\tilde{\lambda} \mu_m +\lambda \tilde{\lambda}}\langle\nu_k ,U_m|_\Gamma \rangle_\Gamma .\langle U_m|_\Gamma, \nu_l\rangle _\Gamma.
\end{equation}
This now gives, quadratic, as opposed to linear convergence. This process can be repeated as many times as one desires to further increase the speed of convergence. In practice, to do this once is sufficient and any repetitions of this process would greatly complicate our extension of this algorithm used for calculating derivatives of the scattering matrix.

\section{Calculating the scattering matrix}
\label{smcalcsection}
\subsection{From the Neumann to Dirichlet map to the $S$ matrix}
In~\ref{lmtechnique} we described a procedure to calculate the Neumann to Dirichlet map from the the Dirichlet data of Neumann eigenfunctions on $X$. Here we will describe a procedure to extract the the scattering matrix from such data, culminating in a formula for the $S$ matrix in (see (\ref{finalsm})). As usual, we fix a sheet of $Z$ and define $J,$ the indexing set, to be the $j\in \mathbb{N}$ such that $\mathrm{Im}(\sqrt{\lambda-\mu_j})<0.$ We can proceed as follows:
\\
\\
Fix a basis $\lbrace \nu_j(y) \rbrace$ for the space of Neumann eigenfunctions on $\Gamma$, corresponding to Neumann eigenvalues $\mu_{j}$. Proposition~\ref{3props}, tels us that that the generalised eigenfunction on the cylindrical ends $E$ of $M$, with homogeneous Neumann boundary conditions on the boundary will be of the form: 
$$\varphi_J(\lambda,x,y)= \sum_{j\in J} \left( e^{-i \sqrt{\lambda-\mu_j} x}\nu_j(y)+\sum_{k\in J} S_{J,j,k}(\lambda)e^{i \sqrt{\lambda-\mu_k} x}\nu_k(y) \right) +\sum_{j \in \mathbb{N} \backslash J} T_j(\lambda)e^{i\sqrt{\lambda-\mu_j}x}\nu_j(y), $$
\\
\\
At $x=0,$ $\varphi_J(\lambda,x,y)$ simplifies to

$$\varphi_J(\lambda,0,y)= \sum_{j\in J} \left( \nu_j(y)+\sum_{k\in J} S_{J,j,k}(\lambda)\nu_k(y) \right) + \sum_{j \in \mathbb{N} \backslash J} T_j(\lambda)\nu_j(y). $$

At $x=0,$ the normal derivative with respect to $x$ will be

$$ \sum_{j\in J} \left(  i \sqrt{ \lambda- \mu_{j}} \nu_j(y) (\lambda)-\sum_{k\in J} i \sqrt{ \lambda -\mu_k} S_{J,j,k}(\lambda)\nu_k(y) \right)-\sum_{j \in \mathbb{N} \backslash J} i \sqrt{ \lambda -\mu_j} T_j(\lambda)\nu_j(y).$$

We ought to now be able to see that, on the external domain or ``ends'' $E$, the Neumann to Dirichlet map will be the inverse matrix of the following:

\begin{definition}
\label{intndm}
$$\tilde{N}(\lambda)=  \begin{pmatrix}i\sqrt{\lambda-\mu_1}  &\cdots & \cdots &  \cdots& \\
  \vdots & i \sqrt{\lambda-\mu_{2}} &\cdots& \cdots  \\
    \vdots & \vdots &i\sqrt{\lambda-\mu_{3}}   &\cdots \\
  \vdots & \vdots  &\vdots&\ddots   \\
   \end{pmatrix}$$
\\
\end{definition}

Unfortunately $\tilde{N}(\lambda)^{-1}$ is not quite yet the Neumann to Dirichlet map for $(\Delta-\lambda)$ on $E,$ as such a map only acts on the boundary data of square integrable functions (see equation (\ref{rigndm})), thus we must project out the non square integrable modes of any such generalised eigenfunction beforehand.

\begin{definition}\label{projection}Define
$$P:L^2(\Gamma)\longrightarrow L^2(\Gamma),$$
to be the projection whose kernel spanned by the Neumann eigenfunctions of $\Gamma$ associated to $\mu_j$ for $j\in J.$
\end{definition}

Now observe that the internal domain $X$ and the external domain (or ends) $E$ share a common boundary $\Gamma$. This means that, for any generalised eigenfunction defined on $M$, the action of the Neumann to Dirichlet map calculated on $X$ composed with $P$ and the Neumann to Dirichlet map calculated on $E$ should coincide.\\
\\
Set
\begin{equation}
\label{kereq}
L=\left( PN- P\tilde{N}^{-1} \right).
\end{equation}
The matrix $L$, will have a null-space of dimension $\abs{J}$. Each element of $\mathrm{Ker}(L)$ can be equated with a one of the $\abs{J}$ summands of $\varphi(\lambda,0,y),$ as defined in (\ref{gef}) in terms of some, possibly unknown, orthonormal basis of Neumann eigenfunctions of $\Delta_\Gamma$. 
\\
\\
Applying a singular value decomposition algorithm, or some other procedure to find the kernel of a matrix, i.e. $QR$, gives $\abs{J}$ kernel vectors.
\\
\\
Let $W$ the null-space of $L$
$$ W=\lbrace w^1, \cdots , w^J \rbrace.$$
For each $\omega_j \in W$ we have a representation of a generalised eigenfunction, evaluated at $0$, of the form:

\begin{equation}
\label{gefsum}
\omega^j= \bigoplus_{j \in J} \left(\delta_{j,k}+S_{j,k}(\lambda)\right) \bigoplus_{j\in \mathbb{N} \backslash J} T_j (\lambda).
\end{equation}

We could now, in theory, extract the scattering matrix from this, but before we are able to do such things, and for our scattering matrix to be of any use to us, we need to control the basis of Neumann eigenfunctions. A numerical algorithm for singular value decomposition will not necessarily give us $S_J(\lambda)$ in terms of the basis we want; the basis of Neumann eigenfunction of $\Gamma$ that was carefully chosen when we began the calculation of $N(\lambda)$ in~\ref{directndm}. 
\\
\\
Having found a basis for the null space of  (\ref{kereq}), we restrict our attention to the elements of these vectors that represent the $J$ Fourier modes and discard the rest by means of application of the operator $(1-P).$ The image of $W$ under both $(1-P)$ and $(1-P)N(\lambda)$ forms a basis in $\mathbb{R}^J$.
\\

The linear map $\tau :\mathbb{R}^J \longrightarrow \mathbb{R}^J$, defined on the $(1-P)w^j,$ by

 $$(1-P)\omega^j \mapsto (1-P)N\omega^j ,$$
 
can now be thought of as the identity map from the basis $\lbrace (1-P)\omega^1 , \cdots ,(1-P)\omega^J \rbrace$ of $\mathbb{R}^J$, to basis $\lbrace (1-P)N\omega^1 , \cdots ,(1-P)N\omega^J \rbrace$ of $\mathbb{R}^J.$
 \\
 \\
$\tau$ must be rewritten in terms of the standard basis, whose elements represent the chosen basis of the Neumann eigenfunctions of $\Gamma$. Thus, when acting our chosen basis of $L^2(\Gamma), \ \tau$ can be written as,
\begin{equation}
\label{taudef}
\tau(\lambda)=\lbrace(1-P)N \omega^1,\cdots ,(1-P)N\omega^J\rbrace^{-1}\lbrace(1-P)\omega^1,\cdots ,(1-P)\omega^J\rbrace.
\end{equation}

Now we note that applying $N$ to each $\omega_j$ gives:

\begin{equation}
\label{twodirectsums}
N(\lambda)\omega^j=   \bigoplus_{j \in J}\frac{S_{J,j,k}(\lambda)-\delta_{j,k}}{i \sqrt{ \lambda -\mu_j}}  \bigoplus_{j\in \mathbb{N} \backslash J} \frac{T_j (\lambda)}{i\sqrt{ \lambda -\mu_j}}.
\end{equation}
 Let us define $D(\lambda,k), \ k\in \mathbb{N}$ to be the $\abs{J} \times \abs{J}$ matrix acting on the set of $J$ modes for which $\mathrm{Im}(\sqrt{\lambda-\mu_j})<0.$ For the calculation of $S_J(\lambda),$ we will only be using this with $k=0.$ 
\begin{definition}
\label{dmatrix}
$$D(\lambda,k)= \frac{\partial^k}{\partial \lambda^k} \begin{pmatrix}i\sqrt{\lambda-\mu_1}  &\cdots & \cdots &  \cdots&\cdots \\
  \vdots & i \sqrt{\lambda-\mu_{2}} &\cdots& \cdots & \cdots \\
    \vdots & \vdots &i\sqrt{\lambda-\mu_{3}}   &\cdots & \cdots\\
  \vdots & \vdots  &\vdots&\ddots & \cdots  \\
  \vdots &\vdots &\vdots  &\vdots&i \sqrt{\lambda-\mu_{\abs{J}}}.
   \end{pmatrix}$$
\end{definition}
One can see now that, as $(1-P)$ will project out the second direct sum in~(\ref{twodirectsums}), the map $(1-P)N$ will be a $J\times J$ matrix defined as follows: 
 \begin{equation}
 \label{tee}
 \tau(\lambda)=  (D(\lambda,0)+S_J(\lambda).D(\lambda,0)).(  \mathrm{Id}+ S_J(\lambda))^{-1}.
 \end{equation}

This means that finally
\begin{equation}
\label{finalsm}
S_J(\lambda)=(\tau(\lambda)-D(\lambda,0))^{-1}(-D(\lambda,0)-\tau(\lambda)).
\end{equation}

\subsection{Some words on computational cost}
When calculating the $S$ matrix in practice, there are several variables governing computational cost.
\begin{itemize}
\item The number of Neumann eigenvalues of the internal domain taken. This is akin to truncating in series in equation (\ref{levstrick}) at $m$.
\item The choice of sheet in $Z,$ namely $\abs{J}:$ In the simplest case, this is simply the number of `ends' of the domain, though it can be greatly higher.
\item The number of modes chosen $n$. This is akin to the dimension of the matrix $N(\lambda)$ where, equation (\ref{levstrick}) defines it for the $k,l^{th}$ element.
\end{itemize}

The computational cost of the of computing the $S$ matrix will depend on our choice of $m$ and $n$ as they are truncations of an infinite series. In~\ref{evaccuracy}, where the optimum number of modes and eigenvalues to use for the most accurate result is discussed; it was decided that $20$ modes and $2000$ eigenvalues was sufficient, though, in all cases $m>>n.$ This implies that the computation of $N(\lambda)$ (see~\ref{levstrick}), which involves a the multiplication of an $n \times m$ matrix with an $m \times m$ diagonal matrix and then multiplication again with an $m \times n$ matrix, will dominate any calculation off computational cost and thus the asymptotic complexity of the whole operation is that of rectangular matrix multiplication. This was classically treated as $O(m^2n),$ but reducing this continues to be an area of active research (for example, see~\cite{hormander3}).

\section{Derivatives of the S matrix}
\label{derrivssection}
This section will show that an extension to the above method can be used to calculate $S_J^{(n)}(\lambda)=\frac{\partial^n}{\partial \lambda^n}S_J(\lambda)$. We will guide the reader through this process which culminates with an explicit formula in (\ref{finalderiv}). This is an  interesting endeavor in its own right but, our numerical calculations in section~\ref{numresults} make use of the argument principle and Newton's method (see Proposition~\ref{polesint}), both of which require the first derivative of $S.$ It could, in theory, also be used to write the scattering matrix as a power series. We will show that the accuracy of the computation is as accurate as the computation for the original $S$ matrix and the asymptotic computational cost is no greater. This compares favorably with the, much cruder, alternative of picking a small $h>0$ and dividing the difference of $S(\lambda+h)$ and $S(\lambda)$ by it. We don't have to worry about the choice of $h,$ the potential for magnification of errors or the possibility of strange behaviors when $\lambda$ is a non-trivial fraction of $h$ away from a resonance.

\subsection{A Neumann to Dirichlet map for the system on the external domain}
Fix a sheet of $Z.$ If $\varphi_J(\lambda,x,y)$ is a generalised eigenfunction, then $\\(\Delta-\lambda)\varphi_J(\lambda,x,y)=0.$ Thus, when we differentiate with respect to $\lambda$, we get.
$$\frac{\partial}{\partial \lambda}(\Delta-\lambda)\varphi_J(\lambda,x,y)=(\Delta-\lambda)\varphi_J'(\lambda,x,y)-\varphi_J(\lambda,x,y)=0,$$
For any $n$ we get
$$\frac{\partial^n}{\partial \lambda^n}(\Delta-\lambda)\varphi_J(\lambda,x,y)=(\Delta-\lambda)\varphi_J^{(n)}(\lambda,x,y)-\varphi_J^{(n-1)}(\lambda,x,y)=0, $$
where $\varphi_J^{(n)}(\lambda,x,y)$ denotes $\frac{\partial^n}{\partial \lambda^n}\varphi_J(\lambda,x,y)$ for brevity. One can simply look for a solution to the resulting system of equations in a similar manner to (\ref{prob}):

\begin{IEEEeqnarray*}{lLl} 
\IEEEyesnumber \label{firstsystem}
(\Delta - \lambda)\varphi_J(\lambda,x,y)=0,   & \frac{\partial \varphi_J}{\partial n} |_{\Sigma}=0 , \\
(\Delta-\lambda)\varphi_J'(\lambda,x,y)-\varphi_J(\lambda,x,y)=0,& \frac{\partial\varphi_J'}{\partial n} |_{\Sigma}=0 ,\\
\quad \quad \quad \quad \quad \quad  \vdots&\ \ \vdots\\
(\Delta-\lambda)\varphi_J^{(n)}(\lambda,x,y)-\varphi_J^{(n-1)}(\lambda,x,y)=0,\quad& \frac{\partial\varphi_J^{(n)}}{\partial n} |_{\Sigma}=0 , .
\end{IEEEeqnarray*}
On the other hand, since $\varphi(\lambda,x,y)$ is known on $E,$ we can recall equation (\ref{gefa}) and see
$$\varphi_J(\lambda,x,y) =\sum_{j\in J} \left( e^{-i \sqrt{\lambda-\mu_j} x}\nu_j(y)+\sum_{k\in J} S_{J,j,k}(\lambda)e^{i \sqrt{\lambda-\mu_k} x}\nu_k(y) \right) +\sum_{j \notin  J} T_j(\lambda)e^{i\sqrt{\lambda-\mu_j}x}\nu_j(y), $$
and deduce that

\begin{IEEEeqnarray*}{rLl} 
\varphi_J'(\lambda,x,y) &=&\sum_{j\in J} \left( \frac{-ix}{2\sqrt{\lambda-\mu_j} }e^{-i \sqrt{\lambda-\mu_j} x}\nu_j(y)+\sum_{k\in J}\left[ S'_{J,j,k}(\lambda) - \frac{x  S_{J,j,k}(\lambda)}{2i\sqrt{\lambda-\mu_k}}\right]e^{i \sqrt{\lambda-\mu_k} x}\nu_k(y) \right)
\\ 
\IEEEyesnumber
\label{geflam}
&&+\sum_{j\notin J}\left[ T'_j(\lambda) +\frac{ixT_j(\lambda)}{2\sqrt{\lambda-\mu_j}} \right]e^{i\sqrt{\lambda-\mu_j}x}\nu_j(y), \\
&&\\
\IEEEyesnumber
\label{gef1deriv}
\frac{\partial}{\partial x}\varphi_J'(\lambda,x,y) &=& \sum_{j\in J}   \left[ \frac{-i}{2\sqrt{\lambda-\mu_j} }-x\right]e^{-i \sqrt{\lambda-\mu_j} x}\nu_j(y) 
\\ && +\sum_{j, k\in J} \left[ i\sqrt{\lambda-\mu_k} S'_{J,j,k}(\lambda) + \frac{i  S_{J,j,k}(\lambda)}{2\sqrt{\lambda-\mu_k}}-xS_{J,j,k}(\lambda) \right]e^{i  \sqrt{\lambda-\mu_k} x}\nu_k(y)
\\ &&  +\sum_{j \notin  J}\left[ i\sqrt{\lambda-\mu_j}T'_j(\lambda) +\frac{iT_j(\lambda)}{2\sqrt{\lambda-\mu_j}}-xT_j(\lambda) \right]e^{i\sqrt{\lambda-\mu_j}x}\nu_j(y).
\end{IEEEeqnarray*}

At $0$ these two generalised functions become:

\begin{IEEEeqnarray*}{rLl} 
\IEEEyesnumber
\label{gefderivlam0}
\varphi_J'(\lambda,0,y)& =&\sum_{j\in J} \left( \sum_{k\in J} S'_{J,j,k}(\lambda) \nu_k(y) \right)+\sum_{j \in \mathbb{N} \backslash J} T'_j(\lambda) \nu_j(y), \\
&&\\
\frac{\partial}{\partial x}\varphi'(\lambda,0,y) &=&\sum_{j\in J} \left( \frac{1}{2i\sqrt{\lambda-\mu_j} }\nu_j(y)+\sum_{k\in J}\left[ i\sqrt{\lambda-\mu_k} S'_{J,j,k}(\lambda) - \frac{  S_{J,j,k}(\lambda)}{2i\sqrt{\lambda-\mu_k}} \right]\nu_k(y) \right)\\
\IEEEyesnumber
\label{gefderivlam0}
& &+\sum_{j \in \mathbb{N} \backslash J}\left( i\sqrt{\lambda-\mu_j}T'_j(\lambda) -\frac{T_j(\lambda)}{2i\sqrt{\lambda-\mu_j}} \right)\nu_j(y).
\end{IEEEeqnarray*}

We can now see that the external Neumann to Dirichlet map, denoted $\tilde{N}_1(\lambda),$ for the system will take the form
$$\tilde{N}_1(\lambda):l^2 \oplus l^2 \rightarrow l^2 \oplus l^2. $$
Using the $\tilde{N}(\lambda)$ introduced in Definition~\ref{intndm}, the fact that
$$\frac{\partial}{\partial x}\varphi_J'(\lambda,0,y) =\tilde{N}(\lambda)\varphi_J'(\lambda,0,y)+\frac{\partial}{\partial \lambda} \tilde{N}(\lambda)\varphi_J(\lambda,0,y)$$
in addition to the fact that
$$\frac{\partial}{\partial x}\varphi_J(\lambda,0,y) =\tilde{N}(\lambda)\varphi_J(\lambda,0,y),$$

we can see that by defining
$$ \tilde{N}_{(1)}(\lambda)=
 \begin{pmatrix}\tilde{N}(\lambda)&0\\
\frac{\partial}{\partial \lambda} \tilde{N}(\lambda)& \tilde{N}(\lambda)
   \end{pmatrix},$$

then, analogous to (\ref{kereq}), the Neumann to Dirichlet map on $E$ for this system will be $P \tilde{N}_{(1)}(\lambda)^{-1}.$
There is no reason for us to limit ourselves to first derivatives. We should go further now and do the same thing for $\varphi_J^{(n)}(\lambda,x,y).$ It is at this point the we take note of the fact that each successive differentiation of $e^{\pm i \sqrt{\lambda-\mu}x}$ respect to $\lambda$, produces a factor of of $x$. Since we will be focusing on $\varphi_J$ and $\frac{\partial}{\partial x} \varphi_J$ at the boundary, where $x=0,$ it is unnecessary to differentiate $e^{\pm i \sqrt{\lambda-\mu}x}$ more than once, and all terms that result from such actions, terms in these summands with a factor of $x^2$ will simply be denoted them as $\mathrm{h.o.t.}$
\\
\\

We will now introduce some new notation: $P\tilde{N}_{(n)}(\lambda)^{-1}$ and $PN_{(n)}(\lambda)$ we define to be the Neumann to Dirichlet maps for the system of $\varphi_J^{(n)}(\lambda,x,y)$ on the external and internal domains respectively.
Differentiating $\varphi_J^{(n)}(\lambda,x,y)$ $n$ times gives us:

\begin{IEEEeqnarray*}{rLl} 
\IEEEyesnumber
\label{geflam}
\varphi_J^{(n)}(\lambda,x,y) &=&\sum_{j\in J} \left[ -x \frac{\partial^n \tilde{N}_j}{\partial \lambda^n}(\lambda)+\mathrm{h.o.t}\right]e^{-i \sqrt{\lambda-\mu_j} x}\nu_j(y)\\
&&+\sum_{j,k\in J}\left[ x\sum_{q=1}^n \binom{n}{q} \frac{\partial^k \tilde{N}_k}{\partial \lambda^k}(\lambda).S_{J,j,k}^{(n-q)}(\lambda)+S_{J,j,k}^{(n)}(\lambda)+\mathrm{h.o.t}\right]e^{i \sqrt{\lambda-\mu_k} x}\nu_k(y) \\
&&+\sum_{j \notin J}\left[ x \sum_{q=1}^n \binom{n}{q} \frac{\partial^q \tilde{N}_j}{\partial \lambda^q}(\lambda).T_{j}^{(n-q)}(\lambda)+T_{j}^{(n)}(\lambda)+\mathrm{h.o.t} \right]e^{i\sqrt{\lambda-\mu_j}x}\nu_j(y). 
\end{IEEEeqnarray*}

These expressions evaluated on the boundary become:

\begin{IEEEeqnarray*}{rLl} 
\IEEEyesnumber
\varphi_J^{(n)}(\lambda,0,y) &=&\sum_{j,k\in J} S_{J,j,k}^{(n)}(\lambda)\nu_k(y) +\sum_{j \notin J} T_{j}^{(n)}(\lambda)\nu_j(y), \\
&&\\
\IEEEyesnumber
\label{gefderiv}
\frac{\partial}{\partial x}\varphi_J^{(n)}(\lambda,0,y) &=& \sum_{j\in J} \left[ -\frac{\partial^n \tilde{N}_j}{\partial \lambda^n} (\lambda) \right]\nu_j(y) \\
&& \sum_{j,k\in J}\left[ \sum_{q=0}^n \binom{n}{q} \frac{\partial^q \tilde{N}_k}{\partial \lambda^q}(\lambda).S_{J,j,k}^{(n-q)}(\lambda)\right]\nu_k(y) \\
&&+\sum_{j \notin J}\left[ \sum_{q=0}^n \binom{n}{q} \frac{\partial^q \tilde{N}_j}{\partial \lambda^q}(\lambda).T_{j}^{(n-q)}(\lambda)\right]\nu_j(y). 
\end{IEEEeqnarray*}

$\tilde{N}_{(n)}(\lambda)$ will be a block-upper triangular matrix, ($\tilde{N}_j,$ for $ j \in \mathbb{N}$ has been used above to be the $j^{th}$ element of the diagonal matrix 
$\tilde{N}$ so the reader should avoid confusing the two).
$$\tilde{N}_{(n)}(\lambda):\bigoplus_{\lbrace 0,\ldots ,n\rbrace} l^2 \longrightarrow \bigoplus_{\lbrace 0,\ldots ,n\rbrace} l^2,$$
given by
$$\tilde{N}_{(n)}(\lambda)= \begin{pmatrix}  \tilde{N}(\lambda) &0& \cdots &\cdots&  \cdots&0 \\
  \frac{\partial}{\partial \lambda} \tilde{N}(\lambda)& \tilde{N}(\lambda) &0& \cdots &\cdots &0\\
  \binom{2}{0} \frac{\partial^2}{\partial \lambda^2}\tilde{N}(\lambda)  &\binom{2}{1}\frac{\partial}{\partial \lambda}\tilde{N}(\lambda)  &\binom{2}{2}\tilde{N}(\lambda)   &0&\cdots& 0\\
  \cdots & \cdots &\cdots &\cdots&\cdots &0  \\
    \cdots & \cdots &\cdots &\cdots&\cdots &0  \\
\binom{n}{0}\frac{\partial^n}{\partial \lambda^n}\tilde{N}(\lambda)  &\binom{n}{1} \frac{\partial^{n-1}}{\partial \lambda^{n-1}} \tilde{N}(\lambda)  &\cdots&\cdots  &\cdots& \binom{n}{n}\tilde{N}(\lambda)
   \end{pmatrix}$$

\subsection{A Neumann to Dirichlet map for the system on the internal domain}
Now we turn our attention towards the internal domain $X.$ $N_{(n)}(\lambda)$ can be computed with an extension of Levitin-Matletta's method. Equation (\ref{alternate}) describes the method when used to compute $N(\lambda)$. Their ``trick" to increase the rate of convergence (\ref{levstrick}) is unaffected, so long as it is performed only once.
\\
\\
The $\lbrace U_m \rbrace$ remain unchanged from (\ref{alternate}) and are used to denote the orthonormal Neumann eigenfunctions and $\mu_m,$ their corresponding eigenvalues. $\lbrace \nu_j \rbrace$ will denote orthonormal an basis of $L^2(\Gamma)$  and finally, we will use $\lbrace \Phi_k \rbrace$ and $\lbrace \Phi^{(n)}_k \rbrace,$ to denote solutions to the following system:

\begin{IEEEeqnarray*}{llll} 
\IEEEyesnumber \label{xfirstsystem}
(\Delta - \lambda)\Phi_{k}(\lambda,x,y)=0, &\quad \quad   &  \frac{\partial \Phi_{k}}{\partial n} |_{\Sigma}=0 , \ \frac{\partial \Phi_{k}}{\partial n}|_{\Gamma} = \nu_{k_0}\\
(\Delta-\lambda)\Phi_{k}'(\lambda,x,y)-\Phi_{k}(\lambda,x,y)=0, &  & \frac{\partial \Phi_{k}'}{\partial n} |_{\Sigma}=0 , \ \frac{\partial \Phi_{k}'}{\partial n}|_{\Gamma} = \nu_{k_1}\\
\quad \vdots & & \quad \vdots \\
(\Delta-\lambda)\Phi_{k}^{(n)}(\lambda,x,y)-\Phi_{k}^{(n-1)}(\lambda,x,y)=0, & &  \frac{\partial \Phi_{k}^{(n)}}{\partial n} |_{\Sigma}=0 , \ \frac{\partial \Phi^{(n)}_{k}}{\partial n}|_{\Gamma} = \nu_{k_n},
\end{IEEEeqnarray*}
where k is the n-tuple $\lbrace k_0,k_1,\ldots k_n \rbrace.$
\\
\\
We begin by fixing $\lambda$ and will now calculate elements of $N_{(n)}(\lambda)$ which map to $\Phi^{(n)},$ where $k_n\in \mathbb{N}$ and $l\in \mathbb{N}^n:$

\begin{IEEEeqnarray*}{lCll} 
N_{(n),k_n,l}& =&\ip{N \nu_{k_n},\nu_{l}}= \ip{\Phi^{(n)}_k|_{\Gamma},\nu_{l}}=\ip{\Phi^{(n)}_{k}|_{\Gamma},\frac{\partial \Phi^{(n)}_{l}}{\partial_n}_\Gamma}\\
&=&\ip{\nabla\Phi^{(n)}_{k},\nabla \Phi^{(n)}_{l}}+\ip{\Delta \Phi^{(n)}_{k},\Phi^{(n)}_{l}}=\ip{\nabla\Phi^{(n)}_{k},\nabla \Phi^{(n)}_{l}}+\lambda \ip{\Phi^{(n)}_{k},\Phi^{(n)}_{l}}+\ip{\Phi^{(n-1)}_{k},\Phi^{(n)}_{l}}.
\end{IEEEeqnarray*}

Since each $\Phi_k$ can be written as $\sum_m \Phi_k\ip{\Phi_k,U_m}$ (the same is true for  $\Phi^{(n)}_k$), and by definition, $\Phi_k =(\Delta-\lambda)^{-j}\Phi^{(n-j)},$ it follows that:

\begin{IEEEeqnarray*}{lLl} 
N_{(n),k_n,l} &=\sum_m(\ip{\nabla U_m,\nabla U_m}+\lambda)\ip{\Phi^{(n)}_{k},U_m}\ip{U_m,\Phi^{(n)}_{l}}+\ip{\Phi^{(n-1)}_{l},\Phi^{(n)}_{l}}\\
&=\sum_m(\ip{\nabla U_m,\nabla U_m}+\lambda)\ip{\Phi^{(n)}_{k},U_m}\ip{U_m,\Phi^{(n)}_{l}}+\sum_m\ip{\Phi^{(n-1)}_{k},U_m}\ip{U_m,\Phi^{(n)}_{l}}\\
\IEEEyesnumber \label{derivndmaplm}
 &=\sum_m(\lambda-\mu_m)\ip{\Phi^{(n)}_{k},U_m}\ip{U_m,\Phi^{(n)}_{l}}+\sum_m \ip{\Phi^{(n-1)}_{k},U_m}\ip{U_m,\Phi^{(n)}_{l}}.
 \end{IEEEeqnarray*}
Green's second identity, for any $j,$ means that
\begin{IEEEeqnarray*}{lLl} 
\IEEEyesnumber \label{greenndm}
\ip{\Phi_j,U_m} &=\sum_m\frac{1}{\lambda-\mu_m}\left( \ip{\Delta \Phi_j,U_m}-\ip{\Phi_j,\Delta U_m} \right)=\sum_m\frac{1}{\lambda-\mu_m}\ip{\nu_j,U_m|_\Gamma}\\
\ip{\Phi^{(n)}_{j},U_m}&=\sum_m\frac{1}{\lambda-\mu_m}\left( \ip{\Delta \Phi^{(n)}_{j},U_m}-\ip{\Phi^{(n)}_{j},\Delta U_m}-\ip{\Phi^{(n-1)}_{j},U_m} \right).\\
&=\sum_m\frac{1}{\lambda-\mu_m}\left( \ip{\nu_{j_n},U_m|_\Gamma}-\ip{\Phi^{(n-1)}_j,U_m} \right).
\end{IEEEeqnarray*}
So then $N_{(n),k,l}$ becomes
$$\sum_m\left( \ip{\nu_{k},U_m|_\Gamma}-\ip{\Phi^{(n-1)}_j,U_m} \right)\ip{U_m,\Phi^{(n)}_{l}},$$
and inductively, we see that
$$N_{(n),k_n,l}=\sum_m \ip{\nu_{k_n},U_m|_\Gamma}\ip{U_m,\Phi^{(n)}_{l}}=\sum_m \sum_{p=1}^n \frac{(-1)^{p-1}}{(\lambda-\mu_m)^p}  \ip{\nu_{k_n},U_m|_\Gamma} \ip{U_m|_\Gamma,\nu_{l_p,}}.$$
Thus the $n^{th}$ block-row for $N_{(n)}(\lambda)$ is made up of the direct sum of maps defined component-wise by

\begin{equation}
\label{etaequation}
\eta_{(p)}(\lambda)= \sum_m  \frac{(-1)^{p-1}}{(\lambda-\mu_m)^p}  \ip{\nu_{k},U_m|_\Gamma} \ip{U_m|_\Gamma,\nu_{l,}},
\end{equation}
where $p$ runs through $1,\ldots n.$
\\
\\
Now, for the system, $N_n(\lambda)$ is a block upper-triangular matrix, acting on $\Phi \oplus \cdots \oplus \Phi^{(n)},$ of the same form as $\tilde{N}_{(n)}(\lambda),$ given by

\begin{equation}
\label{etamatrix}
N_{(n)}(\lambda)= \begin{pmatrix} \eta_{(1)}(\lambda) &0& \cdots &\cdots&  \cdots&0 \\
\eta_{(2)}(\lambda)&\eta_{(1)}(\lambda) &0& \cdots &\cdots &0\\
\eta_{(3)}(\lambda)  &\eta_{(2)}(\lambda) &\eta_{(1)}(\lambda) &0&\cdots& 0\\
  \cdots & \cdots &\cdots &\cdots&\cdots &0  \\
    \cdots & \cdots &\cdots &\cdots&\cdots &0  \\
\eta_{(n)}(\lambda)  &\eta_{(n-1)}(\lambda) &\cdots&\cdots  &\cdots&\eta_{(1)}(\lambda)
   \end{pmatrix}
\end{equation}

We can assert here that the computational cost of such an approach is no greater than that of crudely approximating the derivatives from first principles. Each of the $\eta_p$ in (\ref{etaequation}), that goes into making up the $N_{(n)}$ matrix (\ref{etamatrix}), comes about in a very similar way to (\ref{levstrick}) and has the same computational cost, namely that of $n \times m$ by $m \times n$ matrix multiplication. Even though the number of $\eta_p$ required to make up the block-diagonal $N_{(n)}$ rises arithmetically with the order of the derivatives, the number of unique $\eta_p$ required for the $n^{th}$ derivative is only $n+1$. The reader will see in the following subsection, that the rest of the calculation consists of a series of additions, multiplications, inversions and one Singular Vaule Decomposition of (in practice) much smaller matrices. There are no issues relating to the speed of convergence either as the order of the denominators in (\ref{etaequation}) will always be greater or equal to $2$.

\subsection{Extracting $S_J^{(n)}(\lambda)$}
The coefficients of $\varphi^{(n)}_J(\lambda,0,y)$ can now be computed by finding a basis for the null space of 
$$L_n=\left( PN_{(n)}- P\tilde{N}_{(n)}^{-1} \right).$$
We will denote such a basis as ${\omega_1^{(n)},\ldots \omega_J^{(n)}}.$
Finally, using the same argument made when calculating $S(\lambda),$
$$\tau_n(\lambda)=\lbrace(1-P)\omega_1^{(n)},\cdots ,(1-P)\omega_J^{(n)}\rbrace \lbrace(1-P)N \omega_1^{(n)},\cdots ,(1-P)N\omega_J^{(n)}\rbrace^{-1},$$
thus 
\begin{IEEEeqnarray*}{lCll} 
\tau_n(\lambda)& =&\left( D(\lambda,n)-\sum_{q=0}^n \binom{n}{q} D(\lambda, q).S^{(n-q)}(\lambda) \right)S^{(n)}(\lambda)^{-1}\\
&=&\left( D(\lambda, n) -\sum_{q=1}^n\binom{n}{q} D(\lambda, q).S^{(n-q)}(\lambda)- D(\lambda, 0)S^{(n)}(\lambda) \right)S^{(n)}(\lambda)^{-1}.\
\end{IEEEeqnarray*}
All of the $S_J^{(n-q)}(\lambda)$ are known, having been previously calculated. Finally we can say that
\begin{equation}
\label{finalderiv}
S_J^{(n)}(\lambda)=\Bigg(\mathrm{Id}-D(\lambda, 0)\tau_n(\lambda)\Bigg)^{-1}. \left( D(\lambda, n) .\tau_n(\lambda)-\sum_{q=1}^n \binom{n}{q} D(\lambda, q). \tau_n(\lambda).S^{(n-q)}(\lambda) \right).
\end{equation}

\section{Embedded eigenvalues and resonances}
\label{resbackground}

This section will give an overview of embedded eigenvalues, complex resonances and describe how the scattering matrix can be used to compute the latter. It should be noted that computational methods are becoming more important generally in science, with new techniques under continuous development~\cite{steve_johnson_casimir}, \cite{timobem}, \cite{ab_mps}.
\subsection{Embedded eigenvalues}

An embedded eigenvalue of an operator is an isolated point within an operator's continuous spectrum which is actually an eigenvalue of finite multiplicity. The paper by Evans, Levitin and Vassiliev~\cite{trappedmodes} proved the existence of embedded eigenvalues for the Neumann Laplacian on two dimensional waveguides with an obstacle, and/or deformation of the waveguide so long as the domain has cross-sectional symmetry. 

This was further generalised  to waveguides with cylindrical ends by Davies and Parnovski~\cite{trappedmodes2}. Parnovski and Levitin have, amongst others, produced two other papers on this topic~\cite{hawkparnovski}~\cite{pencilev}. More recently, in Levitin and Strohmaier's paper~\cite[6.3]{alexnew} they describe a technique to calculate embedded eigenvalues numerically by looking for real values of $\lambda$ that make the sub-matrix of $\left( N(\lambda)- \tilde{N}^{-1}(\lambda) \right)$, obtained by omitting the rows and columns representing non square integrable modes, singular  (see (\ref{levstrick}) and definition ~\ref{intndm} for definitions of $N$ and $ \tilde{N}$). In this case, non square integrable modes will be the $\mu< \lambda,$ as in such a situation  $e^{i \sqrt{\lambda-\mu}}$ will certainly not be in $L^2$ for real $\lambda$. An eigenvector corresponding to a zero-eigenvalue will necessarily represent the boundary data of an eigenfunction and not a generalised eigenfunction as the Neumann to Dirichlet maps on the internal and external domains co-inside when acting on that vector and the modes have been chosen deliberately to be in $L^2$. The problem with such an approach is that it is hard to distinguish an embedded eigenvalue from a resonance very close to the real line~\cite[6.3]{alexnew}. Like Levitin and Strohmaier, we have calculated a small number of embedded eigenvalues, but the main focus of the numerical calculations in section~\ref{numresults} has been to use the scattering matrix to locate resonances. The reason for this is that the properties of the scattering matrix lend themselvs to use in algorithms that are able to ``sweep" an area in search of them and this process can be automated to ``zoom in" on any resonances found (see Proposition~\ref{polesint} and the preceeding and following discussion).

\subsection{Complex resonances}
The scattering matrix, and its derivatives, can be used to calculate complex resonances on any sheet of $Z$. Previous work on this topic has focused on on either providing asymptotic bounds for the number of resonances~(\cite{christiensen}, \cite{melrosepolybounds}, \cite{zworskipolybounds}, \cite{tanyaasymp}, \cite{zworskig}), or their calculation~(~\cite{lm}, \cite{aslanyanebdcalc}, \cite{apv}, \cite{db}, \cite{betcke1}, \cite{betcke2}, \cite{nazarov1}, \cite{alexnew}). For the remainder of this paper, we will focus on the latter. As was alluded to previously, reason we have gone down this route is because the numerical technique we have just constructed is absolutely conducive to this. Levitin and Strohmaier have already used the scattering matrix to locate resonances on surfaces with hyperbolic cusps~\cite{alexnew}; as we now are in possession of the scattering matrix for Euclidean waveguides, we will do the same for them.
\\
\\
We define a resonance here to be a pole of the scattering matrix. The relationship between the resolvent, the Neumann to Dirichlet map and the scattering matrix means that poles of the resolvent coincide with zeros of the determinant of the the inverse of the scattering matrix, and their multiplicities will be the same.
\\
\\
Theorem \ref{funceq} can be used to show that every pole of $S_J(\lambda),$ on the sheet $J$ of $Z,$ coincides with a zero of $S_J(\lambda^*)$ in the physical sheet and vice versa. We have used $\lambda^*$ to denote the canonical projection of $\lambda$ to the physical sheet; when both $\lambda$ and $\lambda^*$ are identified with a subset of the complex plane, they will be in the same location. Since the resolvent and scattering matrix are holomorphic in the physical sheet, it can't have poles there, and we will have no zeros in a non-physical sheet of $Z.$ This means we can now make use of the argument principle to locate resonances, and locating resonances in a non-physical sheet of $Z$ has been reduced to locating zeros in the physical sheet. The argument principle together with the Jacobi formula gives:

\begin{prop}
\label{polesint}
Let $C$ be a contour in a non-physical sheet of $Z, \ J$ with winding number one and let $\#$ be the counting function $\#: \ C\mapsto \mathbb{N}$ which counts the number of poles enclosed by $C.$ Then
$$\#(C)=\frac{1}{2 \pi i}\oint_C\mathrm{Tr}(S_J(\lambda)^{-1} .S_J'(\lambda)).$$
\end{prop}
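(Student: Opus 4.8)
<br>

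The plan is to combine the argument principle for operator-valued meromorphic functions with the Jacobi formula for the derivative of a determinant. The key observation is that the counting function $\#(C)$ records the number of poles of $S_J(\lambda)$ enclosed by $C$, counted with multiplicity, and that by Theorem~\ref{funceq} together with the discussion preceding this proposition, the poles of $S_J(\lambda)$ on the non-physical sheet coincide (with multiplicity) with the zeros of $\det S_J(\lambda)$, equivalently the poles of $\det(S_J(\lambda)^{-1})$, on that sheet. Since $S_J(\lambda)$ acts on a finite-dimensional space of dimension $\abs{J}$ (once the sheet is fixed, the rank is locally constant), $\det S_J(\lambda)$ is a genuine scalar-valued meromorphic function on a neighbourhood of the region enclosed by $C$, so the ordinary argument principle applies to it without any Fredholm-determinant subtleties.

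First I would note that $S_J(\lambda)$ is meromorphic and invertible away from its poles and the zeros of $\det S_J$, so $S_J(\lambda)^{-1} S_J'(\lambda)$ is a meromorphic matrix-valued function whose only singularities inside $C$ are at the poles of $S_J$ (and, a priori, at zeros of $\det S_J$, which by the preceding paragraph do not occur inside a non-physical sheet contour since those correspond to poles in the physical sheet). Next I would invoke the Jacobi formula, $\frac{d}{d\lambda}\log\det S_J(\lambda) = \mathrm{Tr}\bigl(S_J(\lambda)^{-1} S_J'(\lambda)\bigr)$, valid wherever $S_J$ is holomorphic and invertible. Therefore
$$\frac{1}{2\pi i}\oint_C \mathrm{Tr}\bigl(S_J(\lambda)^{-1} S_J'(\lambda)\bigr)\,d\lambda = \frac{1}{2\pi i}\oint_C \frac{(\det S_J)'(\lambda)}{\det S_J(\lambda)}\,d\lambda,$$
and the right-hand side, by the scalar argument principle, equals the number of zeros minus the number of poles of $\det S_J$ inside $C$, counted with multiplicity.

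The final step is a bookkeeping argument: I would argue that $\det S_J(\lambda)$ has no poles inside $C$ other than those forced by poles of $S_J(\lambda)$ itself, and that the order of vanishing or pole of $\det S_J$ at a point $\lambda_0$ matches the pole order of $S_J$ there in the sense relevant to the counting function. Concretely, near a pole $\lambda_0$ of $S_J$ one writes $S_J(\lambda)$ in a local Smith-type (Laurent) normal form and reads off that the multiplicity of $\lambda_0$ as a pole of the resolvent — which is how $\#$ is defined via the correspondence with resolvent poles stated in the preamble — equals the negative of the order of $\det S_J$ at $\lambda_0$; since there are no zeros of $\det S_J$ inside $C$, the contour integral of $(\det S_J)'/\det S_J$ returns exactly $\#(C)$ up to sign, and orienting $C$ with winding number one fixes the sign to give $\#(C) = \frac{1}{2\pi i}\oint_C \mathrm{Tr}(S_J(\lambda)^{-1} S_J'(\lambda))\,d\lambda$.

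The main obstacle I anticipate is the last bookkeeping step: making precise that the pole order of the scattering matrix, the pole order of the resolvent, and the order of the scalar function $\det S_J$ all agree, including multiplicities, at each resonance. This requires either citing the standard correspondence between resonances as poles of $S_J$ and poles of the meromorphically continued resolvent (which the paper has already asserted) and then relating that to $\det S_J$ via a local factorization, or invoking a Gohberg–Sigal-type argument principle for operator-valued functions directly. I would handle it by appealing to the finite-dimensionality of $S_J(\lambda)$ on a fixed sheet, which reduces everything to elementary facts about matrix-valued meromorphic functions, and by referencing the resolvent–scattering-matrix correspondence already established; the computation itself is then routine.
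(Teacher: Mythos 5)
Your route is the same as the paper's: the paper offers no argument beyond the phrase ``the argument principle together with the Jacobi formula,'' and your proposal is an expansion of exactly that --- finite rank $\lvert J\rvert$ on a fixed sheet, the identity $\mathrm{Tr}(S_J^{-1}S_J') = \frac{d}{d\lambda}\log\det S_J$, and the scalar argument principle applied to $\det S_J$.

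Two points need tightening. First, your opening paragraph asserts that the poles of $S_J$ on the non-physical sheet coincide with zeros of $\det S_J$ \emph{on that sheet}; this contradicts both the paper's preamble (Theorem~\ref{funceq} places the corresponding zeros at the mirror points $\lambda^*$ in the \emph{physical} sheet, and there are no zeros on a non-physical sheet) and your own third paragraph, where $\det S_J$ correctly has a \emph{pole}, not a zero, at each resonance. Second, the sign: with $C$ positively oriented and $\det S_J$ having only poles inside, the argument principle yields $Z-P=-\#(C)$, so the displayed integral as written computes $-\#(C)$; the positive count is obtained either by inserting a minus sign or by integrating the same integrand around the mirrored contour in the physical sheet, where the resonances appear as zeros of $\det S_J(\lambda^*)$ (this is evidently what the paper's preceding discussion about ``reducing to locating zeros in the physical sheet'' intends). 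Your remark that the sign is ``fixed by orienting $C$ with winding number one'' does not resolve this, since winding number one is already the standard positive orientation. The sign slip is arguably present in the paper's own statement, but a proof should confront it rather than absorb it into the orientation convention; the remaining bookkeeping (matching pole orders of $S_J$, of the resolvent, and of $\det S_J$ via a local Smith-type factorization) is handled correctly in your sketch.
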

This approach was featured in the paper by Davies and Aslanyan, but not applied to the scattering matrix~\cite{aslanyanebdcalc}. If a contour can be found that contains one or more zeros, we can subdivide then integrate over the subdivisions and repeat the process until a small enough contour has been found containing a single resonance. Simpson's method for numerical integration will suffice for this. Once we are sufficiently close to any resonances we may find, Newton's method can then be used to obtain their location to a desired level of accuracy. We can then multiply the scattering matrix $S_J(\lambda) \text{ by } (\lambda_0-\lambda)^{-1},$ where $\lambda_0$ is the location of the zero, then apply Newton's method again, repeating if necessary to find  its order. This process can be automated to such an extent that one need only feed parameters for a search area into a script, wait, and then be rewarded with a list of locations for any resonances found.

\section{Conclusion and presentation of results}
\label{numresults}

We will conclude by presenting the reader with the results of some numerical calculations. In \ref{circobs} we will replicate and improve somewhat the accuracy of the calculations produced by Aslanyan, Parnovski and Vassiliev~\cite{apv}, and Levitin and Marletta~\cite{lm}. In~\ref{mraccuracy} - \ref{modeaccuracy} we will discuss the finite element method mesh refinement, number of eigenvalues and and modes needed to obtain a desired accuracy and discuss where to ``cut" the internal domain from the ends. All resonances up to this point will have been calculated for the sheet $J=\lbrace 1 \rbrace$ (see~\ref{jay}), but in \ref{newsheet}, we will track the location of a resonance and observe it cross from the sheet $J=\lbrace 1 \rbrace$ to $J=\lbrace 2 \rbrace$ as the domain is changed. In \ref{circends} we will present some tabulated results and graphs describing the motion of resonances in an area as the domain is continuously changed. \ref{tdsl} gives a brief overview of time delay and scattering length and we will finalise the paper by linking to some animations of plots of various resonances as the domain is continuously changed before suggesting additional areas of study.
\\
\\
All of the finite element method data used to obtain these results has been produced in FreeFem++~\cite{freefem}. FreeFem++ was chosen at the time because it is free and open source, with no restrictions on installation and use. It has proved itself to be reliable and with good documentation.  FreeFem++ contains methods to produce triangulations (or tetrahedralisation if working in 3-D) for domains given to it by the user in the form of parameterised curves, implements both the finite element method for solving PDEs and the implicitly restarted Arnoldi method for finding eigenvalues and eigenvectors by making use of the ARPACK subroutine~\cite{arpack} ~\cite{arnoldi}. Though the results have not been presented here, it was able to perform similar calculations in three dimensions for ends with rectangular cross sections and circular cross-sections and could theoretically work for domains with arbirtarily drawn cross sections and domains with potentials. If the reader wishes to compare finite element method software, the site \url{http://feacompare.com} has an exhaustive list that, at the time of writing, numbered 78. Boundary element method techniques could also be used in place of the finite element method here. In certain circumstances they may have several advantages~\cite{timobem}. At this stage, we have not explored this option any further because, for each domain, calculation of numerical solutions to pdes is only required once, and for the two dimensional domains we have presented results for here, the compute time has been ``tolerable", around five minutes for each domain. This will certainly change if higher dimensional domains are used with more complicated geometries. The method of particular solutions~\cite{betke_mps} is another computational technique that has been generating significant interest of late, and where software to implement it is starting to become easily available (see \url{https://github.com/ahbarnett/mpspack}).
\\
\\
The code used to extract the scattering matrix and its derivatives, from the output of the finite element method (then later search for resonances) was written in Wolfram Mathematica and has been subsequently rewritten in Sage. It is available at \\ \url{https://bitbucket.org/greg5783/smcalc2/}. It was important to use a software package that was able to perform linear algebaic operations on vectors and matrices with complex values. MATLAB could also be used for this. Sage was found to have a slight speed advantage over Mathematica provided care was taken to write efficient code. Sage is more complicated to use, for example when parallelising operations (in mathematica, one only has to decorate the entire script with \verb!parallel()! and parralelisation of any and all ensuing operations is implemented automatically in an optimised way.

\subsection{Cylinders with a circular obstacle}
\label{circobs}

\begin{figure}[H]
\centering
\includegraphics[trim = 10mm 0mm 20mm 140mm, clip, width=6cm]{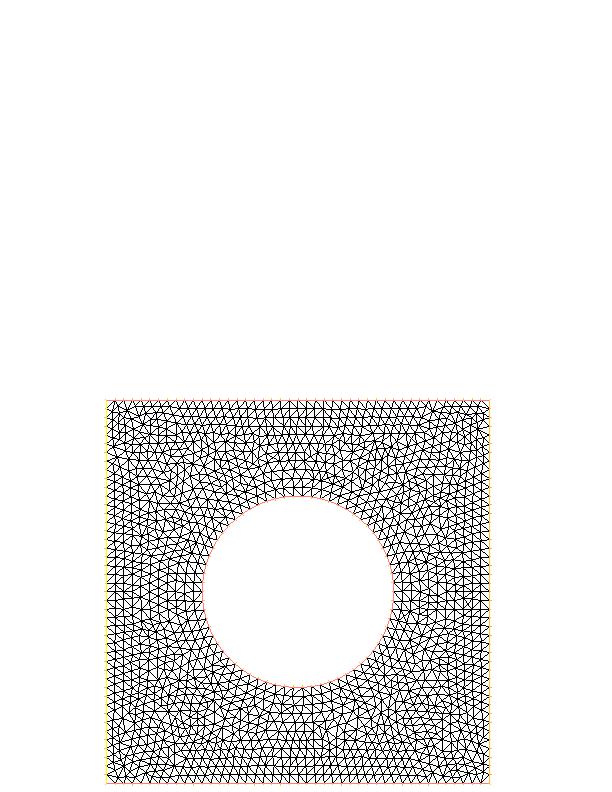}
\caption{One of our triangulated interior domains produced by FreeFEM++~\cite{freefem}. The waveguide is composed of this interior domain, with the two ends the same width as the interior domain joined on the left and right sides.}
\label{levitinsrectangle}
\end{figure}

The first thing we wanted to do once we had a working algorithm was replicate the results of our predecessors and try to improve their accuracy. Amongst other things, Levitin and Marletta looked at an infinite rectangle of width $2$, with a single circular obstruction (See Figure~\ref{levitinsrectangle}). The radius of this obstruction, $R$, is varied along with  $\delta,$ the position of its centre (vertical displacement) relative to the centre line of the rectangle. When the vertical displacement is $0$, there exist embedded eigenvalues. The embedded eigenvalues decay to a resonances when the vertical displacement $\delta$ becomes non-zero \cite{lm}. With the parameterisation of $\lambda\mapsto \lambda^2,$ they presented a number of values for these resonances. We have performed our calculation to the highest accuracy our method and available hardware afforded us, namely $2000$ eigenvalues, $20$ modes (see~\ref{evaccuracy}) and a mesh refinement of $80 $ (see~\ref{mraccuracy}), and have displayed ours and our predecessors' results in Table~\ref{accuracytable} below. Any additional eigenvalues, modes and mesh refinements after these values did not affect the results. We have been able to offer a slight improvement on the number of decimal places.
 
\begin{figure}[H]
\begin{tabularx}{\textwidth}{| c | c || X  | X | X   |    }
\hline
 $R$ & $ \delta$ & Our calculation & Levitin-Marletta & Aslanyan et al\\
 \hhline{|=|=|=|=|=|}
 
$0.3$&$ \delta=0$   & $1.504\light{97}$ &$1.50486$ & $ 1.5048$\\ \hline
$0.3$&$\delta=0.1$  & $1.50783 + 0.0001205i$& $1.5078 +   10^{-4} i$& $1.5102+ \times 10^{-4}i$\\  \hline
$0.3$&$\delta=0.2$  & $1.51651 + 0.0004740i$&$1.5165 + 5 \times 10^{-4} i$ & $1.5188 = 5 \times 10^{-4}i$ \\ 
 \hhline{|=|=|=|=|=|}

 $0.5$&$ \delta=0$   & $1.391\light{38}$ &$1.39134$ & $ 1.3913$\\ \hline
$0.5$&$\delta=0.1$  & $1.39785 + 0.0009255i$& $1.3979 + 9 \times 10^{-4} i$& $1.3998+9 \times 10^{-4}i$\\  \hline
$0.5$&$\delta=0.2$  & $1.41779 + 0.0039101i$&$1.4178 + 3.90\times 10^{-3} i$ & $1.4196+3.93 \times 10^{-3}i$ \\  \hline
   \end{tabularx}
\captionof{table}{This table displays our own results alongside those of Levit-Marletta and Aslanyan, Parnovski and Vassiliev. Greyed out figures portray a lower level of confidence in their accuracy on our part.}
\label{refine}
\end{figure}

\subsection{Some notes on mesh refinement}
\label{mraccuracy}  
  
   \begin{figure}[H]
   \label{refinement}
\centering
 \begin{tabular}{cc   }
\includegraphics[trim = 10mm 0mm 20mm 140mm, clip, width=6cm]{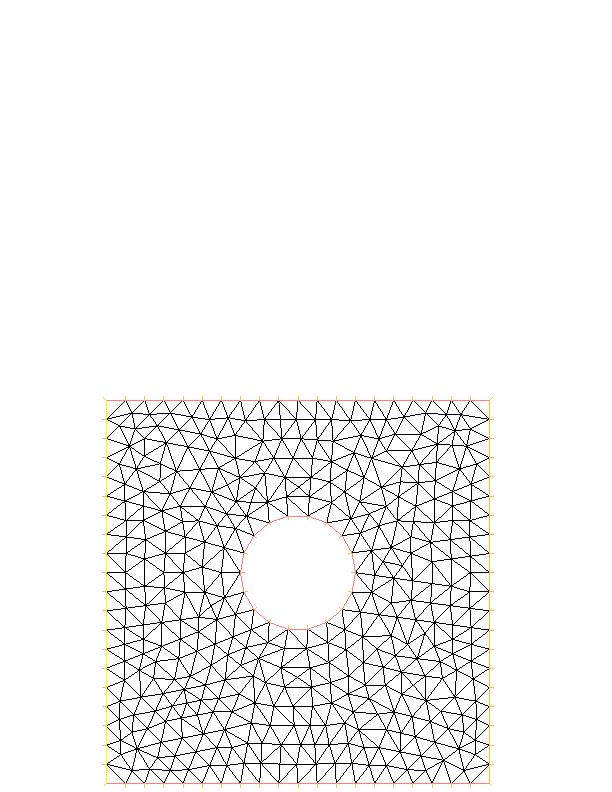}&\includegraphics[trim = 10mm 0mm 20mm 140mm, clip, width=6cm]{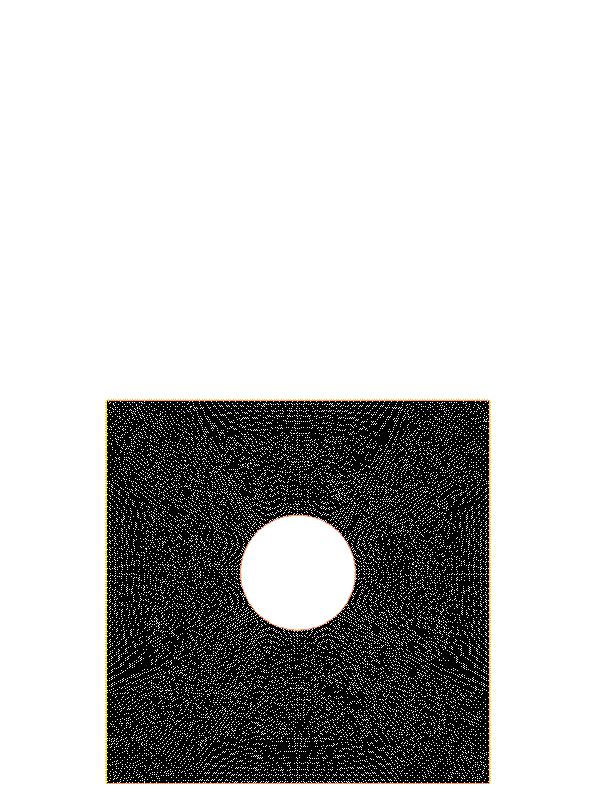}
\end{tabular}
\caption{Some internal domains for $R=0.3, \ \delta=0.1$ with mesh refinement of $10$ and $80$ respectively.}
\label{accuracytable}
\end{figure}

We have looked at the effects of mesh refinements on the accuracy of the results obtained. Whilst it should be obvious that the more refined the mesh becomes, the more accurate the result, we have tabulated the results of some experiments to demonstrate just to what extent. The standard method for producing a triangulated domain in FreeFem++ is to draw the outline as a union of parameterised curves, then use the program's own triangulation algorithm after specifying the number of points on each such curve. Our scale of mesh refinement was taken to be the number of such points per unit length on the boundary. Using the same domain as~\ref{levitinsrectangle}, we have tabulated some results for the calculation of these resonances using $2000$ eigenvalues and $20$ modes for each connected component of $\Gamma$ (see Figure~\ref{accuracytable}). The reader can see from Table~\ref{mr1} and Table~\ref{mr2} how the result stabilises as the mesh refinement increases.

\begin{figure}[H]
\begin{tabularx}{\textwidth}{| c  ||X | X  |  }
\hline
Mesh refinement&$R=0.3 \ \delta=0.1$  & $R=0.3 \ \delta=0.2$\\
 \hhline{|=||=|=|}
 $10$&$1.50943+0 .0001157 i$&$1.51791 + 0.0004530i$\\
  $15$&$1.50847 +0 .0001185 i$&$1.51708 + 0.0004657i$\\
   $20$&$1.50821 +   0.0001193i $&$1.51684 +  0 .0004691i$\\
    $25$&$1.50805 +0 .0001198 i$&$1.51670 +0 .0004712i$\\ 
     $30$ &$1.50797 +   0 .0001200i$&$1.51663 +    0.0004722i$\\
    $35$&$1.50793 +0 .0001202i $&$1.51660 +0 .0004727i$\\
     $40$&$1.50790 + 0 .0001203 i$&$1.51657 +   0 .0004731i$\\
   $45$&$1.50788 +0 .0001203 i$&$1.51655 + 0.0004734i$\\
   $50$&$1.50786 +  0  .0001204i $&$1.51654 +   0 .0004736i$\\
    $55$&$1.50785 + 0.0001204 i$&$1.51653 + 0.0004737i$\\
    $60$&$1.50785 +0 .0001204 i$&$1.51652 + 0.0004738i$\\ 
      $65$&$1.50784 + 0.0001204i$&$1.51651 + 0.0004739i$\\
   $70$&$1.50783 + 0.0001205i$&$1.51651 + 0.0004740i$\\
   $75$&$1.50783 +   0.0001205i$&$1.51651 +   0.0004740i$\\
   $80$&$1.50783 + 0.0001205i$&$1.51651 + 0.0004740i$\\ \hline
   \end{tabularx}
\captionof{table}{This table shows the location of the resonances calculated as the number of mesh refinements is increased. The domain is unchanged from~\ref{levitinsrectangle}, $R$ denotes the radius of the obstacle and $\delta,$ the distance it is off-centre by.}
\label{mr1}
\end{figure}

\begin{figure}[H]
\begin{tabularx}{\textwidth}{| c  ||X | X  |  }
\hline
Mesh refinement&$R=0.5 \ \delta=0.1$&$R=0.5 \ \delta=0.2$\\
 \hhline{|=||=|=|}
 $10$&$1.39874 +0 .0009122i$&$1.41857 + .00385170i$\\
  $15$&$1.39822 +0 .0009199i$&$1.41811 +0.0038854i$\\
   $20$&$1.39805 +0  .0009225i$&$1.41796 +   0.0038971i$\\
    $25$&$1.39797 +0 .0009237i$&$1.41789 +0 .0039021i$\\ 
     $30$ &$1.39793 +   0.0009243i$&$1.41786 +   0.0039049i$\\
    $35$&$1.39791 + 0.0009247i$&$1.41784 + 0.0039063i$\\
     $40$&$1.39789 +  0 .0009249i$&$1.41782 +   0.0039075i$\\
   $45$&$1.39788 + 0.0009251i$&$1.41781 + 0.0039083i$\\
   $50$&$1.39787 +  0 .0009252i$&$1.41781 +   0.0039089i$\\
    $55$&$1.39786 +0.0009253 i$&$1.41780 +0 .0039092i$\\
    $60$&$1.39786 + 0.0009254i$&$1.41780 +0 .0039095i$\\ 
     $65$&$1.39786 + 0.0009254i$&$1.41779 + 0.0039097i$\\
   $70$&$1.39785 + 0.0009255i$&$1.41779 + 0.0039098i$\\
   $75$&$1.39785 +  0.0009255i$&$1.41779 +   0.0039101i$\\
   $80$&$1.39785 + 0.0009255i$&$1.41779 + 0.0039101i$\\ \hline
   \end{tabularx}
\captionof{table}{Like Table~\ref{mr1}, this table shows the location of the resonances calculated as the number of mesh refinements is increased. The domain is still unchanged from~\ref{levitinsrectangle}, $R$ still denotes the radius of the obstacle and $\delta,$ the distance it is off-centre by.}
\label{mr2}
\end{figure}

\subsection{Some notes on the number of eigenvalues}
\label{evaccuracy}
In subsection~\ref{circobs}, we mentioned that we used $2000$ eigenvalues but gave no justification for this. Here we will present the reader with some graphs and charts to demonstrate convincingly, why we chose this number. We have tabulated data describing what happens when number of eigenvalues is increased. We also present some convergence graphs that track the value of the leading coefficient of the scattering matrix as the number of eigenvalues increases for a selection of the domains at a randomly chosen point.

\begin{figure}[H]
 \begin{tabularx}{\textwidth}{| c  ||X | X  |  }
\hline
Number of eigenvalues&$R=0.3 \ \delta=0.1$  & $R=0.3 \ \delta=0.2$\\
 \hhline{|=||=|=|}
 $200$&$1.50783 +0.000120482i$&$1.51651 +0.000474043i$\\
 $300$&$1.50783 +0.000120482i$&$1.51651 +0.000474043i$\\
 $400$&$1.50783 +0.000120482i$&$1.51651 +0.000474045i$\\
 $500$&$1.50783 +0.000120482i$&$1.51651 +0.000474045i$\\
 $600$&$1.50783 +0.000120482i$&$1.51651 +0.000474045i$\\
 $700$&$1.50783 +0.000120482i$&$1.51651 +0.000474045i$\\
 $800$&$1.50783 +0.000120483i$&$1.51651 +0.000474045i$\\
 $900$&$1.50783 +0.000120482i$&$1.51651 +0.000474045i$\\
 $1000$&$1.50783 +0.000120483i$&$1.51651 +0.000474046i$\\
 $1200$&$1.50783 +0.000120483i$&$1.51651 +0.000474046i$\\
 $1400$&$1.50783 +0.000120483i$&$1.51651 +0.000474046i$\\
 $1600$&$1.50783 +0.000120483i$&$1.51651 +0.000474046i$\\
 $1800$&$1.50783 +0.000120483i$&$1.51651 +0.000474046i$\\
 $2000$&$1.50783 +0.000120483i$&$1.51651 +0.000474046i$\\
  \hline
   \end{tabularx}
\captionof{table}{This table shows the effect of increasing the number of eigenvalues on the accuracy of some selected resonances. We have used a mesh refinement of $80$ here and $20$ modes. The domain is from~\ref{levitinsrectangle}, $R$ denotes the radius of the obstacle and $\delta,$ the distance it is off-centre by.}
\end{figure} 
  
\begin{figure}[H]
\begin{tabularx}{\textwidth}{| c  ||X | X  |  }
\hline
Number of eigenvalues&$R=0.5 \ \delta=0.1$&$R=0.5 \ \delta=0.2$\\
 \hhline{|=||=|=|}
 $200$&$1.39785 +0.000925529i$&$1.41779 +0.00391010i$\\
 $300$&$1.39785 +0.000925538i$&$1.41779 +0.00391011i$\\
 $400$&$1.39785 +0.000925536i$&$1.41779 +0.00391013i$\\
 $500$&$1.39785 +0.000925536i$&$1.41779 +0.00391013i$\\
 $600$&$1.39785 +0.000925536i$&$1.41779 +0.00391014i$\\
 $700$&$1.39785 +0.000925536i$&$1.41779 +0.00391014i$\\
 $800$&$1.39785 +0.000925537i$&$1.41779 +0.00391014i$\\
 $900$&$1.39785 +0.000925536i$&$1.41779 +0.00391014i$\\
 $1000$&$1.39785 +0.000925537 i$&$1.41779 +0.00391014i$\\
 $1200$&$1.39785 +0.000925537i$&$1.41779 +0.00391014i$\\
 $1400$&$1.39785 +0.000925537i$&$1.41779 +0.00391014i$\\
 $1600$&$1.39785 +0.000925537i$&$1.41779 +0.00391014i$\\
 $1800$&$1.39785 +0.000925537i$&$1.41779 +0.00391014i$\\
 $2000$&$1.39785 +0.000925537i$&$1.41779 +0.00391014i$\\
\hline
   \end{tabularx}
\captionof{table}{This table shows the effect of increasing the number of eigenvalues on the calculated position of some selected resonances. The domain remains the same, $R$ denotes the radius of the obstacle and $\delta,$ the distance it is off-centre by.}
\end{figure}

From this it might seem like it is unnecessary to use many eigenvalues, however the number of eigenvalues does have a significant impact on the coefficients of the scattering matrix as can be seen in Figure~\ref{convgraphs}. We have picked, as an example, the domain where $R=0.3$ and $\delta=0.1$ with a mesh refinement of $80.$  We have plotted the real and imaginary components of the leading coefficient of the scattering matrix at the value $1+0.1i.$ This is typical behavior for any arbitrarily chosen point. 
   \begin{figure}[H]
\centering
 \begin{tabular}{cc   }
\includegraphics[width=7cm]{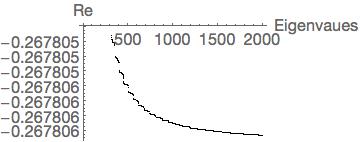}&\includegraphics[width=7cm]{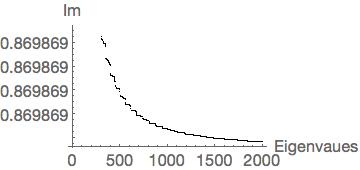}
\end{tabular}
\caption{The real and imaginary components of the first coefficient of the scattering matrix plotted against the number of eigenvalues and eigenvectors used to compute it.}
\label{convgraphs}
\end{figure}

\subsection{Some notes on the ``length" of the domains}
\label{ldomaccuracy}
The reader should recall that the waveguide is made up of a compact part, on which we have applied the finite element method, and the non-compact ends. The choice as to where the compact part finishes and the ends begin is somewhat arbitrary. It does, however, have an impact on the accuracy of any calculations. In practice, if the compact part if taken to be too large, the internal Neumann to Dirichlet map becomes ill conditioned and a source of major inaccuracy. If the compact part is cut too short then the effect of the obstacle is not completely captured. We settled on the lengths we did after careful experimentation. It should be noted here that all domains for which we have furnished results are comparatively simple, so performing these kind of experiments are ``affordable" in terms of compute time; for more complicated and higher dimensional domains, it is a potential vulnerability of the method and extreme care must be taken to ensure that the matrix inversions are well conditioned.
   
\subsection{The importance of the number of modes and choice of auxiliary point}
   \label{modeaccuracy}

It should be noted that the number of modes and the auxiliary point (the $\tilde{\lambda}$  from~\ref{levstrick} used to speed up convergence) chosen had an undetectable effect on the accuracy of the calculations, so long as the choice was ``sensible". It should also be noted that increasing the number of modes is the most computationally costly action we can take, in theory as well as practice, and should be minimised. For the rest of the results presented, we have used $20$ modes for each end, $1000$ eigenvalues and a mesh refinement of $30.$

\subsection{Observing a resonance transition from one sheet to another}
\label{newsheet}
This subsection will present the first explicit calculation of the scattering matrix beyond the first non-physical sheet $J= \lbrace 1 \rbrace$ of $Z$ ($Z$ was defined by equation (\ref{jay}) in section~\ref{sheetsstuff}). We will be looking at the same domain as we did in~\ref{levitinsrectangle}, with the same conventions; an infinite rectangle in $\mathbb{R}^2$ of width $2$ with a single, circular `obstruction' of radius $R$ who's centre is moved  up by $\delta$ from the middle of the waveguide. In~\ref{levitinsrectangle} we used the parameterisation $\lambda \mapsto \lambda^2 $ to enable the comparison of our results with other peoples', from now on we will no longer do this. Of particular interest here is the case where $R=0.2.$ As $\delta$ increases from $0.6$ to $0.7$ the resonance moves from the sheet $J=\lbrace 1 \rbrace$ to $J= \lbrace 2 \rbrace$ as can be seen in Table~\ref{restranstable}. This was the first circumstance where such a phenomena has been observed.

\begin{figure}[H]
\centering
 \begin{tabular}{| c  ||c| c  |   }
\hline
$R=0.2 $   & $J=\lbrace 1 \rbrace$ &$J=\lbrace 2 \rbrace$ 
\\ 
\hhline{|=||=|=|}
$\delta=0$  & $2.403\light{6}$&- \\  \hline
$\delta=0.1$  &$2.407\light{12} +0.00006 i$ &-   \\  \hline
$\delta=0.2$  &$2.417\light{09} +0.0002\light{1} i$ & - \\  \hline 
$\delta=0.3$  & $2.431\light{70} +0.0004\light{0}i$& - \\  \hline 
$\delta=0.4$  &$2.447\light{77}+0.0005\light{9}i$ &-  \\  \hline
$\delta=0.5$  &$2.461\light{01} +0.0005\light{3}i$ &-  \\  \hline 
$\delta=0.6$  &$2.467\light{25} +0.0001\light{3} i$ &- \\  \hline 
$\delta=0.7$  &- & $2.464\light{75}+0.000\light{63}i$\\  \hline   
   \end{tabular}
   \captionof{table}{Resonances for the domain $R=0.2$ showing the resonance moving to a different sheet of $Z$ as $\delta$ increases from $0.6$ to $0.7$}
   \label{restranstable}
\end{figure}

We will also include some contour plots of the absolute value of the determinant of the scattering matrix for this occurrence. We can observe that the ``tail'' of the resonance is visible on both sheets prior to the resonance crossing over.

\begin{figure}[H]
\centering
\begin{tabularx}{\textwidth}{XX}
\includegraphics[clip, width=6cm, height =4cm]{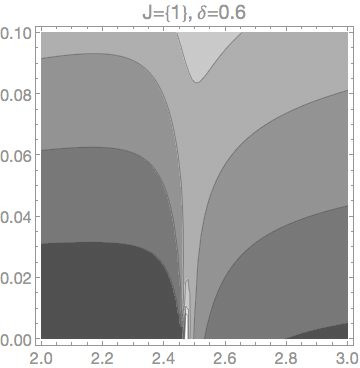}&\includegraphics[clip, width=6cm, height =4cm]{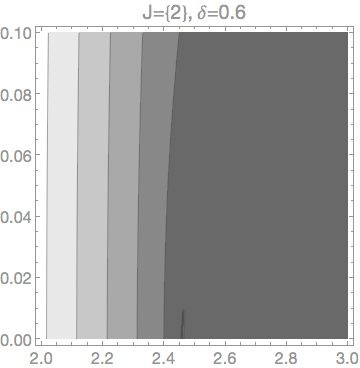}\\
\includegraphics[clip, width=6cm, height =4cm]{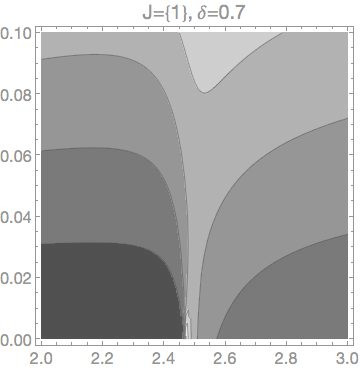}&\includegraphics[clip, width=6cm, height =4cm]{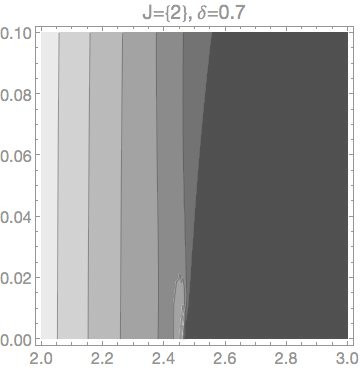}\\
\end{tabularx}
\caption{Contour plots of the absolute value of the the determinant of $S_J(\lambda).$ The resonance can be seen crossing from sheet $ J=\lbrace 1 \rbrace$ to $J=\lbrace 2 \rbrace$ as $\delta$ is increased from $0.6$ to $0.7$. $R=0.2$ throughout.}
\end{figure}

\subsection{Adding ``Ends'' to a Circle}
\label{circends}

In this example, we have added a single, infinite rectangular `end'  to a circle of radius $2$ and the width of the `end' has been varied. We have searched for resonances, on sheets $J=\lbrace1\rbrace, \ J=\lbrace2\rbrace$ and $J=\lbrace3\rbrace,$ within in the search area given by:

 \begin{equation}
 \label{searcharea}
 \lbrace \lambda: 0 \leq \mathrm{Re}(\lambda) \leq 15,-3 \leq \mathrm{Im}(\lambda)\leq 3 \rbrace,
 \end{equation}
 and recorded the results the results found in Table~\ref{circleend2}.  The choice of sheets and and search area on which to look for resonances was somewhat arbitrary, though we believe they provide a wide enough `field of vision' to observe interesting phenomena, but are limited enough to minimise compute time and cost. We have chosen this as an example because we feel it interesting to observe that when the width of the end is close to zero, the resonances are very close to the Neumann eigenvalues of the circle of radius $2,$ of which we have given the first  $9$ non-zero examples in Table~\ref{circradev}. When the width of the end is continuously increased, they migrate away from these values in distinct paths.
We are confident of the accuracy of the resonances calculated to least three decimal places, though we have included the fourth place in a lighter shade for the reader's information. In addition to this, we have plotted them on graphs in Figure~\ref{circleendplots} with colour coded markers indicating the respective sheet of $Z$ they reside on; black for $J=\lbrace1 \rbrace,$ red for $J=\lbrace2 \rbrace$ and green for $J=\lbrace3 \rbrace.$ In the case of varying widths, the paths taken by the resonances as the widths increase are clearly visible, and we have included them.
  \begin{figure}[H]

\begin{tabular}{|c| c|c|c|c|c|c|c| c|}
\hline $0.8476$ &  $2.3323$ & $ 3.6709$& $4.4130$& $7.0698$&$7.1068$&$10.2911$&$11.2442$&$12.3059$\\ 
     \hline
   \end{tabular}
        \captionof{table}{Neumann eigenvalues for a circle of radius $2.$}
  \label{circradev}
      \end{figure}

\begin{figure}[H]
\centering
 \begin{tabular}{ccc   }
\includegraphics[trim = 10mm 10mm 20mm 170mm, clip, width=3.5cm]{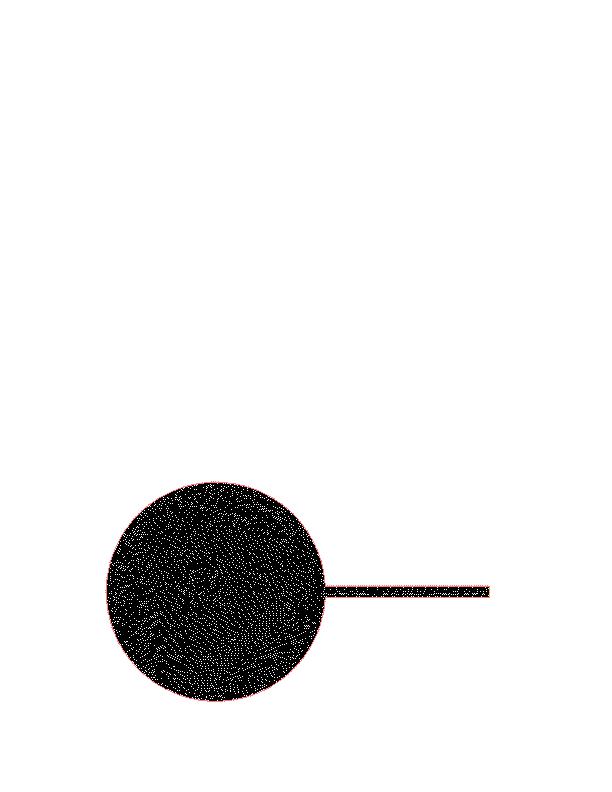}&\includegraphics[trim = 10mm 10mm 20mm 170mm, clip, width=3.5cm]{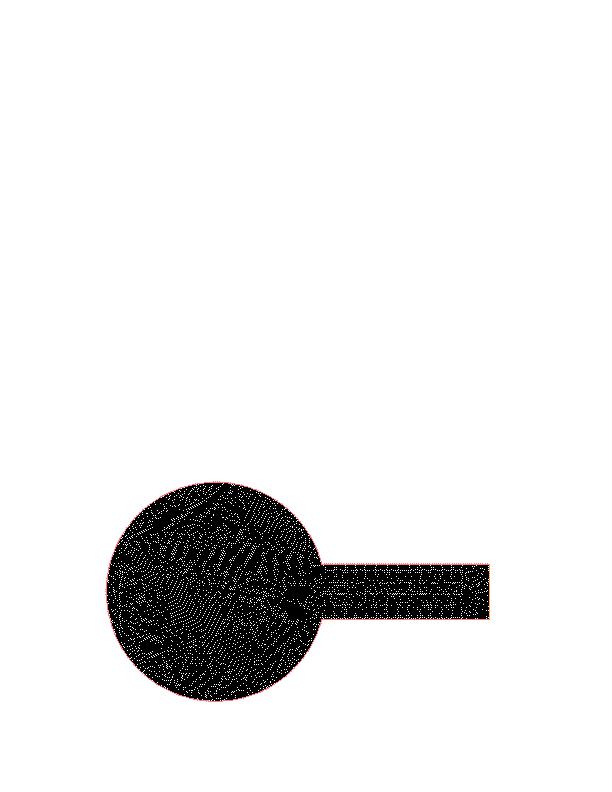}&\includegraphics[trim = 10mm 10mm 20mm 170mm, clip, width=3.5cm]{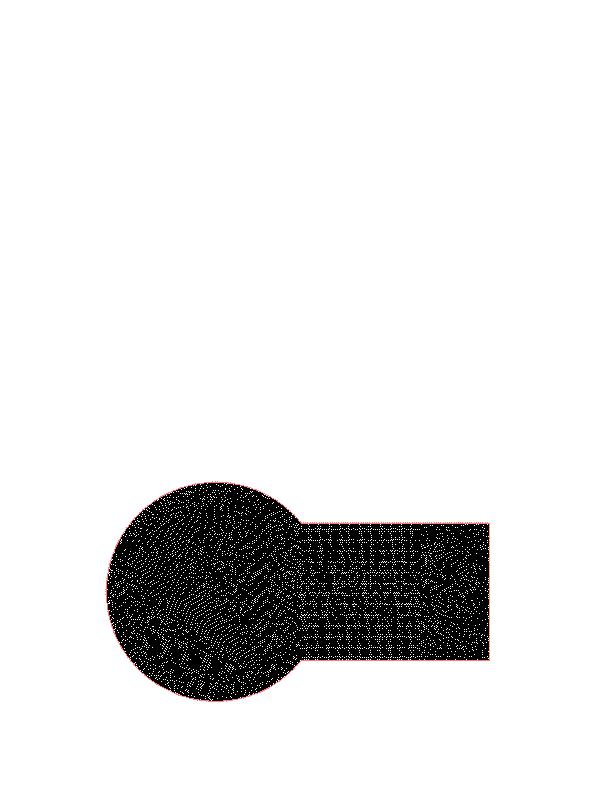}
\end{tabular}
\caption{An example of some internal domains produced by FreeFEM++ for these calculations.}
\label{circleendgraphs}
\end{figure}

\begin{figure}[H]
\begin{tabularx}{\textwidth}{| X | X | X | X |}
\hhline{|====|}
\multicolumn{4}{|c|}{$J=\lbrace1 \rbrace$}\\
  \hline
$w=0.1$& $w=0.2$ &$w=0.5$  &$w=1$\\
\hline 
$0.849\light{6} +0.020\light{6} i$&$0.854\light{1} +0.040\light{8} i$&$0.875\light{3} +0.099\light{2} i$&$0.925\light{5} +0.195\light{7} i$\\
$2.338\light{8} +0.041\light{6} i$&$2.352\light{5} +0.080\light{7} i$&$2.413 +0.184\light{8} i$       &$2.543\light{1} +0.333\light{4} i$\\
$3.674\light{2} +0.014\light{5} i$&$3.681\light{4} +0.026\light{5} i$&$3.709\light{1} +0.051\light{2} I $&$3.757\light{7} +0.080\light{2} i$\\
$4.424\light{6} +0.066\light{4} i$&$4.448\light{0} +0.128\light{5} i$&$4.545\light{2} +0.300\light{9} I $&$4.742\light{5} +0.576\light{4} i$\\
$7.094\light{7} +0.002\light{2} i$&$7.094\light{6} +0.001\light{1} i$&$7.094\light{7} +0.000\light{4} i$&$7.857\light{3} +1.057\light{3} i$\\
$7.112\light{4} +0.132\light{8} i$&$7.172\light{1} +0.254\light{1} i$&$7.406\light{5} +0.565\light{7} i$&$10.772\light{6} +0.169\light{9} i$ \\
$10.329\light{3} +0.118\light{6} i$&$10.403\light{7} +0.207\light{1} i$&$10.666\light{4} +0.294\light{0} i$&$12.288\light{4} +0.894\light{5} i$\\
$11.254\light{7} +0.057\light{2} i$&$11.273\light{3}+0.115\light{1} i$&$11.368 +0.354\light{67} i$&$12.099\light{1} +0.352\light{1} i$\\
$12.312\light{2} +0.026\light{3} i$&$12.323\light{7} +0.050\light{2} i$&$12.356\light{7} +0.134\light{8} i$&$14.720\light{5} +0.665\light{4} i$\\
$14.113\light{9} +0.152\light{7} i$&$14.194\light{9} +0.276\light{6} i$&$14.464\light{9} +0.534\light{5} i$&\\
          \hline
          
           $ w=1.5$& $w=2$ &$w=2.5$  &$w=3$\\
   \hline
$0.990\light{0} +0.301\light{0 }i$&$1.071\light{0} +0.426\light{4 }i$&$1.175\light{2} +0.589\light{8 }i$&$1.317\light{6} +0.829\light{0 }i$\\
$2.696\light{5 }+0.462\light{7 }i$&$2.874\light{6} +0.561\light{4 }i$&$3.069\light{6} +0.576\light{7 }i$&$3.171\light{3} +0.447\light{6 }i$\\
$3.815\light{4} +0.122\light{4 }i$&$3.900\light{0} +0.205\light{4 }i$&$4.056\light{9} +0.375\light{0 }i$&$4.505\light{5} +0.548\light{6 }i$\\
$4.940\light{3} +0.859\light{7 }i$&$5.097\light{0} +1.141\light{7 }i$&$5.184\light{3} +1.349\light{1 }i$&$5.367\light{6} +1.402\light{9 }i$\\
$7.127\light{6} +0.001\light{4 }i$&$7.175\light{7} +0.015\light{1 }i$&$9.252\light{8} +2.425\light{7 }i$&\\
$8.293\light{3} +1.616\light{5 }i$&$8.610\light{2} +2.138\light{0 }i$&&\\
$10.723\light{4} +0.088\light{3 }i$&$11.929\light{0} +0.075\light{8 }i$&&\\
$12.012\light{9} +0.189\light{7 }i$&$13.78\light{0} +2.663\light{5 }i$&&\\
$13.145\light{9} +2.049\light{3 }i$&&&\\
$14.525\light{2} +0.336\light{8}i$&&&\\
      \hline \end{tabularx}

\end{figure}
\raggedbottom

\begin{figure}[H]
\begin{tabularx}{\textwidth}{| X | X | X | X |}
\hhline{|====|}
          \multicolumn{4}{|c|}{$J=\lbrace2 \rbrace$}\\
          \hline
         $w=0.1$& $w=0.2$ &$w=0.5$  &$w=1$\\
      \hline 
&&&$10.253\light{6} +0.115\light{62} i$\\
&&&$11.243\light{5} +0.017\light{77} i$\\
&&&$14.216\light{7} +0.427\light{36} i$\\
\hline
         $ w=1.5$& $w=2$ &$w=2.5$  &$w=3$\\
      \hline
$7.243\light{9} +0.469\light{8} i$&$4.606\light{5} +0.482\light{9} i$&$2.369\light{5} +0.338\light{9} i$&$2.604\light{2} +0.651\light{1} i$\\
$7.743\light{3} +0.929\light{9} i$&$11.076\light{9} +0.0884\light{2 }i$&$4.986\light{9} +0.832\light{4} i$&$5.521\light{7} +1.239\light{3} i$\\
$10.902\light{2} +0.727\light{5} i$&$11.826\light{4} +1.3861\light{6} i$&$7.115\light{6} +0.001\light{1} i$&$7.135\light{6} +0.019\light{2}i$\\
$11.197\light{0} +0.123\light{9} i$	&&$8.367\light{0} +1.4419\light{8 }i$&$9.109\light{6} +1.949\light{2 }i$\\
&&$10.996\light{8} +0.035\light{5} i$&$14.161\light{6} +2.754\light{5 }i$\\
&&$12.763\light{1} +2.163\light{4} i$&\\

\hhline{|====|}
  \multicolumn{4}{|c|}{$J=\lbrace3 \rbrace$}\\
          \hline
              $w=0.1$& $w=0.2$ &$w=0.5$  &$w=1$\\
      \hline 
      &&&\\
        \hline
          $ w=1.5$& $w=2$ &$w=2.5$  &$w=3$\\
      \hline 
&$10.367\light{3} +0.272\light{1}i$&$7.063\light{9} +0.254\light{6} i$&$7.630\light{6} +0.714\light{7} i$\\
&$14.786\light{7} +0.868\light{2}i$&$11.83\light{1} +0.087\light{3} i$&$11.626\light{8} +0.098\light{8} i$\\
&&$11.16\light{1} +1.083\light{3}i$&$12.405\light{9} +2.043\light{0} i$\\
  \hline \end{tabularx}
\captionof{table}{Resonances calculated when a singe rectangular `end' is added to a circle of radius $2$ in two dimensions when $ \lambda $ is in the sheets $J=\lbrace 1 \rbrace, \ J=\lbrace 2 \rbrace$ and $J=\lbrace 3 \rbrace$ of $Z$ as indicated. $\omega$ denotes the width of the end which increases from $0.1$ to $3$.}
\label{circleend2}
\end{figure}

\begin{figure}[H]
\begin{tabularx}{\textwidth}{XX}
\includegraphics[clip, width=6cm, height =4cm]{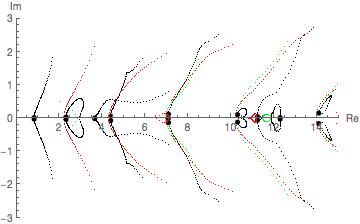}\caption*{$w=0.1$}&\includegraphics[clip, width=6cm, height =4cm]{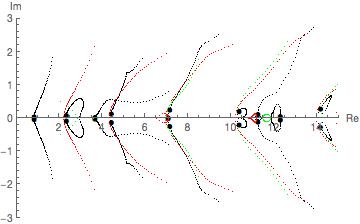}\caption*{$w=0.2$}\\
\includegraphics[clip, width=6cm, height =4cm]{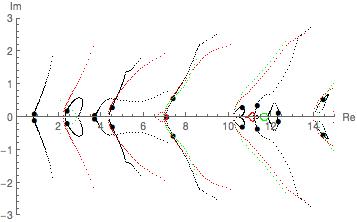}\caption*{$w=0.5$}&\includegraphics[clip, width=6cm, height =4cm]{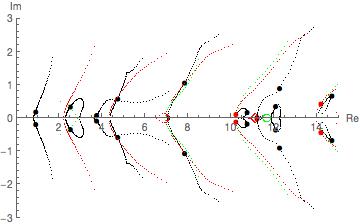}\caption*{$w=1.0$}\\

\includegraphics[clip, width=6cm, height =4cm]{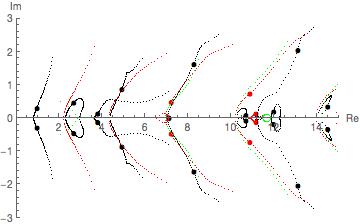}\caption*{$w=1.5$}&\includegraphics[clip, width=6cm, height =4cm]{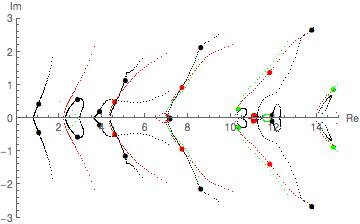}\caption*{$w=2.0$}\\
\includegraphics[clip, width=6cm, height =4cm]{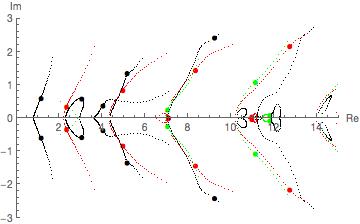}\caption*{$w=2.5$}&\includegraphics[clip, width=6cm, height =4cm]{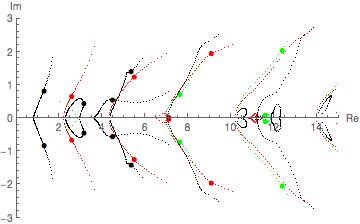}\caption*{$w=3.0$}\\
\end{tabularx}
\caption{Some colour-coded plots of the location of resonances. In this instance the width of a single end is changed. The larger markers are the resonances for the domain indicated, the smaller markers are the entire family of domains with varying widths. This allows the reader to visualise the paths the resonances take as the width of the end increases. Similar, but not identical, paths can be observed with higher numbers of ends.}
\label{circleendplots}
\end{figure}  

\subsection{Animated motion of resonances as the domain is continuously altered}

Due to the speed of the method it has been possible to animate the motion of the resonances as the domains are continuously changed. An animation of the results found in~\ref{circends} can be found in the link below along with many more. Of particular interest to the reader are occurrences where resonances cross sheets, higher order resonances bifurcate, resonances loop etc. The same colour coding convention applies in the animations as in Figure~\ref{circleendplots}. Many other similar animations have been produced
\\

\begin{itemize}
\item Varying the width of one rectangular end \\
\url{https://www.youtube.com/watch?v=6AryY-dHRAM }
\item Varying the width of two equispaced rectangular ends \\
\url{https://www.youtube.com/watch?v=GRl3vY2IkOY}
\item Varying the size of a centrally placed circular `obstruction', one end of width $1$ \\
\url{https://www.youtube.com/watch?v=T6Xap-VQHmQ}
\item Varying the size of a centrally placed circular `obstruction', two equispaced ends of width $1$ \\
\url{https://www.youtube.com/watch?v=WhcdPBotn9A}
\item Varying the size of a centrally placed circular `obstruction', three equispaced ends of width $1$ \\
\url{https://www.youtube.com/watch?v=UP4P9xW7ATI}

\item Varying the angle between two rectangular ends of width $1$ \\
\url{https://www.youtube.com/watch?v=8HDXRFmqxfU}

\item Moving a circular `obstruction' of radius $0.5$ in the x axis, 3 equally spaced ends of width $1.$
\url{https://www.youtube.com/watch?v=B8WmXaDo0bs}
\item Moving a circular `obstruction' of radius $0.5$ in the y axis, 3 equally spaced ends of width $1.$ \url{https://www.youtube.com/watch?v=WncB1xoSSzs}
\end{itemize}

If the reader has any difficulty in accessing these videos, then they can be provided, on request, by other means.
\subsection{Time delay and scattering length}
\label{tdsl}
The notions of time delay and scattering length hail from dynamic scattering theory; they can both be calculated using the scattering matrix. Reed and Simon mention time delay briefly in their text~\cite{rs3}. M\"{u}ller and Strohmaier have also covered time delay and scattering length in their paper \cite{muller}, where they give results that relate the time delay to the geometry of the internal domain. In this section, we will apply this to a pair of the domains featured above. It should be noted that the $\lambda$ in this context will be a real number less than $\mu_1,$ representing the energy of the system and not an element of the $Z.$ The Appendix of M\"{u}ller and Strohmaier's paper provides an overview of the time delay in this setting \cite{mullst}. We will take the (non standard) definition to be: 
\begin{definition}\textbf{Time delay}
$$T(\lambda)=-2\sqrt{\lambda}S^{-1}(\lambda).S'(\lambda)$$
when $\lambda=0,$ we define this to be the \textbf{scattering length}.
\end{definition}
Wigner and Eisenbud were the first to present the time delay in this manner for potential scattering and $T(\lambda)$ is often called the Eisenbud-Wigner time delay operator~\cite{wigner}\cite{eisenbud}. 
M\"{u}ller and Strohmaier have, amongst other things, proved this formula for the case of manifolds with cylindrical ends and, in the case of a single end 

$$T(0)=2\frac{\text{Vol}(X)}{\text{Vol}(\Gamma)}.$$
We will pick a selection of our single ended domains from above and plot their time delay as $\lambda$ approaches $0.$

\begin{figure}[H]
\centering
\begin{tabularx}{\textwidth}{XX}
\includegraphics[clip, width=7cm, height =5cm]{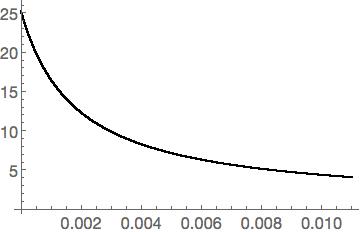}\caption{Circle of radius $2,$ with end width $1.5$. $T(0)$ should theoretically be $25.1327.$ \ignore{25.0877}}&\includegraphics[clip, width=7cm, height =5cm]{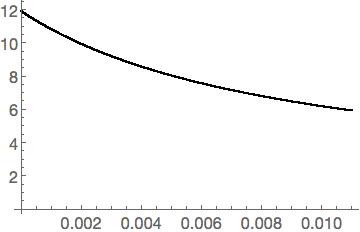}\caption{Circle of radius $2$ with end width $1,$ obstacle radius $0.5.\ T(0)$ \ignore{11.7719} should theoretically be $11.781.$}

\end{tabularx}
\end{figure}

\subsection{Limitations and future directions}
We will finish by informing the reader of the limitations of what we have done as things stand and, give some suggestions for further work in this immediate area.
\begin{itemize}
\item The algorithm is currently limited to Neumann or Acoustic waveguides in Euclidian space; it can be extended, with minimal additional theoretical work, to those with Dirichlet or mixed boundary conditions, as well as those with non-Euclidian metrics on either the internal domain or the ends. There is also the possibility to look at potentials supported on the internal domain.
\item Applying this method to higher dimensions and more complicated domains is possible, though the computational cost will rise very significantly and greater care needs to be taken with accuracy. 
\item Continuing to use the current method as it is to either search for interesting phenomena with respect to the distribution of resonances as David Borthwick has done for hyperbolic surfaces~\cite{db}, or perform domain optimisation and create domains from givern patterns of resonances.
\item Adopting the boundary element method~\cite{timobem} in place of the finite element method or adopt finite element method techniques that minimise dispersion error and try to improve accuracy yet further~\cite{dispersion}.
\end{itemize}
\section{References}

\bibliographystyle{plain}
\bibliography{paper_revised3}

\end{document}